\declaretheoremstyle[bodyfont=\normalfont]{defnstyle}
\declaretheorem[numberwithin=section,refname={def.,defs.},
                Refname={Definition,Definitions},style=defnstyle,name=Definition]{definition}
\declaretheorem[numberlike=definition,refname={cor.,cors.,},
                Refname={Corollary,Corollaries},name=Corollary]{corollary}
\declaretheorem[numberlike=definition,refname={Prop.,Props.},
                Refname={Proposition,Propositions},name=Proposition]{proposition}
\declaretheorem[numberlike=definition,style=defnstyle,name=Counter-example,
                refname={Counter-ex.,Counter-exs.},Refname={Counter-example,Counter-examples}]{ce}
\declaretheorem[numberlike=definition,style=remark]{remark}
\declaretheorem[numbered=no]{claim}
\declaretheorem[numberlike=definition,style=remark]{reduction}
\declaretheorem[numberlike=definition,style=defnstyle]{notation}
\declaretheorem[style=defnstyle,refname={Conjecture,Conjectures,}]{conjecture}
\declaretheorem[numbered=no,style=remark,qed=$\blacksquare$]{proof}
\newlist{rome}{enumerate}{7}
\setlist[rome]{label=(\roman*)}
\newcounter{diagram}
\newenvironment{diagram*}{\begin{center}\begin{tikzpicture}}{\end{tikzpicture}\end{center}}
\newenvironment{diagram}{\setcounter{diagram}{\value{definition}}\refstepcounter{diagram}\begin{center}\hfill\begin{tikzpicture}}
    {\end{tikzpicture}\hfill{\normalfont(\thediagram)}\end{center}\refstepcounter{definition}}
\crefname{diagram}{diagram}{diagrams}
\Crefname{diagram}{Diagram}{Diagrams}
\numberwithin{diagram}{section}
\tikzset{node distance=1.5cm, auto,
  baseline=(current bounding box.center),
  shrink/.style={outer sep=2pt,inner sep=0, minimum size=0},
  squeeze/.style={auto=false,outer sep=3pt,inner sep=0, minimum size=0},
  a/.style={->}, e/.style={->>},
  u/.style={dashed,->},
  n/.style={double equal sign distance, -implies},
  ne/.style={double equal sign distance, -},
  t/.style={double distance=2.5pt, -implies, postaction={draw,-}},
  te/.style={double distance=2.5pt, -, postaction={draw,-}},
  cover/.style={preaction={draw=white, -,line width=6pt}},
  over/.style={auto=false,fill=white,inner sep=1.5pt, minimum size=0, outer sep=0},
  s/.style={shorten >=3pt, shorten <=3pt},
  la/.style={scale=0.8}}
\def\cellslide{0.5}
\def\celllength{.225cm}
\NewDocumentCommand{\cell}{ O{} O{n} O{\cellslide} O{\celllength} m m m }{
  \coordinate (mid) at ($({#5})!{#3}!({#6})$);
  \coordinate (start) at ($(mid)!{#4}!({#5})$);
  \coordinate (end) at ($(mid)!{#4}!({#6})$);
  \draw[#2] (start) to node
  [inner sep=4pt,outer sep=0,minimum size=0,#1]{{#7}} (end);
}
\newdimen{\pullbackmarkerlength}
\def\pullbackangle{30}
\pgfmathsetlengthmacro{\pullbackradius}{\pullbackmarkerlength*sin(45)/sin(45-\pullbackangle)}
\newcommand{\pullback}[1]{
  \draw ({#1}.center) ++(360-\pullbackangle:\pullbackradius) -- +(0,-\pullbackmarkerlength);
  \draw ({#1}.center) ++(270+\pullbackangle:\pullbackradius) -- +(\pullbackmarkerlength,0);
}
\DeclareMathOperator{\id}{id}
\DeclareMathOperator{\Psd}{Ps}
\DeclareMathOperator{\Lax}{Lax}
\newcommand{\iso}{\mathrel{\cong}}
\newcommand{\eqv}{\mathrel{\simeq}}
\newcommand{\Cat}{\textnormal{\textsf{Cat}}}
\newcommand{\slice}[2]{\ensuremath{{#1}\downarrow{#2}}}
\newcommand{\sslice}[3][]{\ensuremath{{#2}\downarrow^{\textnormal{\tiny #1}}_{s}{#3}}}
\newcommand{\pslice}[3][]{\ensuremath{{#2}\downarrow^{\textnormal{\tiny #1}}_{p}{#3}}}
\newcommand{\lslice}[3][]{\ensuremath{{#2}\downarrow^{\textnormal{\tiny #1}}_{l}{#3}}}
\newlength{\arrowlength}
\newcommand{\threecellarr}{\mathrel{\begin{tikzpicture}[baseline=(A.base)]
      \node(A)[inner sep=0,outer sep=0,minimum size=0] at (0,0) {\vphantom{a}};
      \draw[t](A) -- ++(\arrowlength,0);
    \end{tikzpicture}}}
\newcommand{\ainl}[3]{\ensuremath{ {#2}\colon{#1}\rightarrow{#3} }}
\newcommand{\aiinl}[3]{\ensuremath{ {#2}\colon{#1}\xrightarrow{\tiny\iso}{#3} }}
\newcommand{\ninl}[3]{\ensuremath{ {#2}\colon{#1}\Rightarrow{#3} }}
\newcommand{\minl}[3]{\ensuremath{ {#2}\colon{#1}\threecellarr{#3} }}
\newcommand{\@calify}[1]{
  \ifcsname c#1\endcsname
    \message{WARNING: Not over-writing c#1 with calligraphic letter!}
  \fi
    \expandafter\edef\csname c#1\endcsname{\noexpand\ensuremath{\noexpand\mathcal #1}\noexpand\xspace}
  }
\newcounter{@calcount}\stepcounter{@calcount}
\loop\@calify{\Alph{@calcount}}
\newcommand{\valid}{\ensuremath{\checkmark}}
\title{$2$-limits and $2$-terminal objects are too different}
\author[t.\ clingman]{tslil clingman}
\address{Johns Hopkins University, 3400 N. Charles St., Baltimore, MD, USA}
\email{tslil@jhu.edu}
\author[L.\ Moser]{Lyne Moser} \address{UPHESS BMI FSV, Ecole Polytechnique Fédérale de Lausanne, Station 8, 1015 Lausanne, Switzerland}
\email{lyne.moser@epfl.ch}
\subjclass[2020]{18N10, 18A30, 18A25, 18A05.}
\keywords{2-dimensional limits, 2-dimensional terminal objects, slice 2-categories.}
\begin{document}

\maketitle

\begin{abstract}
  In ordinary category theory, limits are known to be equivalent to terminal objects in the slice category of cones. In this paper, we prove that the 2-categorical analogues of this theorem relating 2-limits and 2-terminal objects in the various choices of slice 2-categories of 2-cones are false. Furthermore we show that, even when weakening the 2-cones to pseudo- or lax-natural transformations, or considering bi-type limits and bi-terminal objects, there is still no such correspondence.
\end{abstract}

\section{Introduction}

In this paper we address the question of whether the natural $2$-categorical analogue to the $1$-categorical result giving a correspondence between limits and terminal objects in a slice category of cones, holds.

\subsection{Motivation, the \texorpdfstring{$1$}{1}-dimensional case and the case of \Cat{}}

A \emph{limit} of a functor $\ainl IF\cC$ comprises the data of an object $L\in \cC$ together with a
natural transformation $\ninl{\Delta L}\lambda F$, called the \emph{limit cone}, which satisfies the following universal property: for each $X\in\cC$, the map
 $\ainl{\cC(X,L)}{\lambda_{*}\circ\Delta}{\Cat(I,\cC)(\Delta X,F)}$
given by post-composition with $\lambda$ is an isomorphism of sets. We may also form the slice category $\slice \Delta F$ of cones over $F$, and it is a folklore result that a limit of $F$ is equivalently a terminal object in $\slice \Delta F$.

As an example, and in progressing up in dimension, let us now consider products in \Cat{} -- the category of small categories and functors. The universal property of the product $(\cC\times \cD,\pi_\cC,\pi_\cD)$ of two categories $\cC$ and $\cD$ gives, for each pair of functors $\ainl \cX F\cC$ and $\ainl \cX G\cD$, a functor $\ainl{\cX}{\left<F,G\right>}{\cC\times\cD}$ unique
among those satisfying $\pi_{\cC}\left<F,G\right>=F$ and $\pi_{\cD}\left<F,G\right>=G$.
\begin{diagram} \label{univproduct}
    \node(1)[]{$\cC$};
    \node(2)[right of= 1,xshift=0.3cm]{$\cC\times\cD$};
    \node(3)[right of= 2,xshift=0.3cm]{$\cD$};
    \node(4)[above of= 2]{$\cX$};
    \draw[a](4)to node[la,swap]{$F$}(1);
    \draw[a](4)to node[la]{$G$}(3);
    \draw[u](4)to node[la,over]{$\exists!\left<F,G\right>$}(2);
    \draw[a](2)to node[la]{$\pi_{\cC}$}(1);
    \draw[a](2)to node[swap,la]{$\pi_{\cD}$}(3);
\end{diagram}

However, the category $\Cat$ has further structure. Indeed, it is a $2$-category with $2$-morphisms the natural transformations between the functors. This $2$-dimensional structure is compatible with the product of categories. More precisely, there is a bijection of natural transformations as depicted below, which is implemented by whiskering with the projection functors.
\begin{center}
    \setcounter{diagram}{\value{definition}}\refstepcounter{diagram}
    \hfill
$\left(\begin{tikzpicture}
    \node(2)[]{$\cC$};
    \node(4)[above of= 2]{$\cX$};
    \draw[a,bend right=40](4)to node(a)[la,swap]{$F$}(2);
    \draw[a,bend left=40](4)to node(b)[la]{$F'$}(2);
    \cell[la,yshift=1pt]{a}{b}{$\alpha$};
  \end{tikzpicture}\ , \
  \begin{tikzpicture}
    \node(2)[]{$\cD$};
    \node(4)[above of= 2]{$\cX$};
    \draw[a,bend right=40](4)to node(a)[la,swap]{$G$}(2);
    \draw[a,bend left=40](4)to node(b)[la]{$G'$}(2);
    \cell[la]{a}{b}{$\beta$};
  \end{tikzpicture}
\right)\quad \leftrightsquigarrow\quad\begin{tikzpicture}
    \node(2)[minimum size=0.7cm]{$\cC\times\cD$};
    \node(4)[minimum size=0.7cm,above of= 2]{$\cX$};
    \draw[a,bend right=45](4)to node(a)[la,swap]{$\left<F,G\right>$}(2);
    \draw[a,bend left=45](4)to
    node(b)[la]{$\left<F',G'\right>$}(2);
    \coordinate (l) at ($(a)-(0,0.1cm)$);
    \coordinate (r) at ($(b)-(0,0.1cm)$);
    \cell[la,xshift=-2pt]{l}{r}{$\left<\alpha,\beta\right>$};
  \end{tikzpicture}$
  \label{diag:bijijiji}
  \hfill{\normalfont(\thediagram)}
\end{center}\refstepcounter{definition}
Observe that the natural transformations $\alpha$ and $\beta$ correspond to functors $\alpha\colon \cX\times \mathbbm{2}\to \cC$ and $\beta\colon\cX\times \mathbbm{2}\to \cD$, where $\mathbbm{2}$ is the category $\{0\to 1\}$. In this light, the bijection (\ref{diag:bijijiji}) of natural transformations can be retrieved by applying the universal property of (\ref{univproduct}) to the functors $\alpha\colon \cX\times \mathbbm{2}\to \cC$ and $\beta\colon\cX\times \mathbbm{2}\to \cD$.

 Taken together, the bijections of (\ref{univproduct}) and (\ref{diag:bijijiji}) assemble into an isomorphism of \emph{categories}
\[\aiinl{\Cat(\cX,\cC\times\cD)}
  {\left<(\pi_{\cC})_{*},(\pi_{\cD})_{*}\right>}
  {\Cat(\cX,\cC)\times\Cat(\cX,\cD)}. \]
This then is the defining feature of a $2$-dimensional limit: there two aspects of the universal property, one for morphisms and one for $2$-morphisms.

In the case of the product above, the indexing category is just a $1$-category. Since $\Cat$ is a $2$-category, one could instead consider indexing diagrams by a $2$-category $I$. In order to define a general $2$-dimensional limit in $\Cat$, we need a \emph{category} of higher morphisms between two $2$-functors. This is the category of $2$-natural transformations and $3$-morphisms, called \emph{modifications}, between them. With these notions, a $2$-limit of a $2$-functor $F\colon I\to \Cat$ can be defined as a pair $(\cL,\lambda)$ of a category $\cL$ and a $2$-natural transformation $\ninl {\Delta \cL}\lambda F$ which are such that post-composition with $\lambda$ gives an isomorphism of categories
\begin{diagram}\label{for:2limit}
  \node {$\aiinl {\Cat(\cX,\cL)}{\lambda_*\circ \Delta}{[I,\Cat](\Delta \cX,F)}$ .};
\end{diagram}

\subsection{\texorpdfstring{$2$}{2}-dimensional conjectures}

A $2$-limit of a general $2$-functor $\ainl IF\cA$ is defined in the same fashion as indicated in (\ref{for:2limit}) above; see \Cref{def:limit}. This notion was first introduced, independently, by Auderset \cite{Auderset} and Borceux-Kelly \cite{BorKel}, and was further developed by Street~\cite{Street1976}, Kelly \cite{Kelly1989,KellyBook} and Lack in \cite{Lack2010}. Motivated by the $1$-categorical case, it is natural to ask whether $2$-limits can be characterised as $2$-dimensional terminal objects in the slice $2$-category of $2$-cones. The appropriate notion of terminality here is that of a \emph{$2$-terminal object} -- an object such that every hom-category to this object is isomorphic to the terminal category $\mathbbm{1}$.

Having seen all other concepts involved, let us introduce now the \emph{slice $2$-category} $\sslice \Delta F$ of $2$-cones over a $2$-functor $F\colon I\to \cA$. This $2$-category has as objects pairs $(X,\mu)$ of an object $X\in \cA$ together with a $2$-cone $\mu\colon \Delta X\Rightarrow F$, and as morphisms those morphisms $\ainl XfY$ of $\cA$ making the $2$-cones commute.
  \begin{diagram}\label{diag:hellotriangle}
  \node(1)[]{$\Delta X$};
  \node(2)[right of= 1]{$\Delta Y$};
  \node(3)[below of= 2]{$F$};
  \draw[n](1)to node[la]{$\Delta f$}(2);
  \draw[n](1)to node[swap,la]{$\mu$}(3);
  \draw[n](2)to node[la]{$\nu$}(3);
\end{diagram}
The $2$-morphisms are given by $2$-morphisms in $\cA$ which satisfy a certain whiskering identity.

This slice $2$-category seems appropriate to our conjecture since the $1$-dimensional aspect of the universal property of a $2$-terminal object in there is exactly the same as the $1$-dimensional aspect of the universal property of a $2$-limit. In the special case of~$\Cat$, by generalising the argument we have seen for products, the $1$-dimensional aspect of the universal property of a $2$-limit in \Cat{} suffices to reconstruct its $2$-dimensional aspect. This holds more broadly in every $2$-category admitting tensors by $\mathbbm{2}$, as demonstrated in \Cref{prop:tensor}. Given this, we conjecture:

\begin{conjecture} \label{conj1}
  Let $I$ and $\cA$ be $2$-categories, and let $\ainl IF\cA$ be a
  $2$-functor. Let $L\in\cA$ be an object and
  $\ninl {\Delta L}\lambda F$ be a $2$-natural transformation. The
  following two statements are equivalent:
  \begin{rome}[topsep=3pt]
  \item The pair $(L,\lambda)$ is a $2$-limit of the functor $F$.
  \item The pair $(L,\lambda)$ is a $2$-terminal object in the slice
    $2$-category $\sslice \Delta F$ of $2$-cones over~$F$.
  \end{rome}
\end{conjecture}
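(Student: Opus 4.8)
The plan is to unpack both conditions into explicit statements about hom-categories and then compare them direction by direction. Terminality of $(L,\lambda)$ in $\sslice{\Delta}{F}$ means that for every $2$-cone $(X,\mu)$ the hom-category $\sslice{\Delta}{F}\big((X,\mu),(L,\lambda)\big)$ is isomorphic to $\mathbbm{1}$. I would first describe this hom-category concretely: its objects are the morphisms $\ainl XfL$ in $\cA$ satisfying $\lambda\circ\Delta f=\mu$, and its morphisms $f\Rightarrow g$ are those $2$-morphisms $\phi\colon f\Rightarrow g$ of $\cA$ whose whiskered image $(\lambda_*\circ\Delta)(\phi)$ is the identity modification $\id_\mu$ (this is the whiskering identity defining $2$-morphisms of the slice, specialised to the target $(L,\lambda)$, where both $f$ and $g$ project to the same cone $\mu$). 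Being isomorphic to $\mathbbm{1}$ therefore decomposes into (a) there is a \emph{unique} $f$ with $\lambda\circ\Delta f=\mu$, and (b) the only $\phi\colon f\Rightarrow f$ with $(\lambda_*\circ\Delta)(\phi)=\id_\mu$ is $\id_f$. On the other side, the $2$-limit condition asserts that $\ainl{\cA(X,L)}{\lambda_*\circ\Delta}{[I,\cA](\Delta X,F)}$ is an isomorphism of categories: a bijection on objects (each cone $\mu$ has a unique $f$ with $\lambda\circ\Delta f=\mu$) together with a bijection on morphisms (each modification $\Xi$ from $\lambda\circ\Delta f$ to $\lambda\circ\Delta g$ is $(\lambda_*\circ\Delta)(\phi)$ for a unique $\phi\colon f\Rightarrow g$).

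For the implication (i)$\Rightarrow$(ii) I expect only a short argument. The object-bijection of the $2$-limit gives (a) directly. For (b), faithfulness of $\lambda_*\circ\Delta$ says $\phi\mapsto(\lambda_*\circ\Delta)(\phi)$ is injective on each hom-set; since $\id_f\mapsto\id_\mu$, any $\phi$ with $(\lambda_*\circ\Delta)(\phi)=\id_\mu$ must equal $\id_f$. Thus a $2$-limit is always $2$-terminal in $\sslice{\Delta}{F}$, and this half of the conjecture should go through without difficulty.

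The converse (ii)$\Rightarrow$(i) is where I expect the real obstacle. Terminality again supplies the object-bijection, so the $1$-dimensional universal property of the $2$-limit is recovered for free. The problem is the $2$-dimensional one: I must reconstruct the \emph{full} bijection $\phi\mapsto(\lambda_*\circ\Delta)(\phi)$ between $2$-morphisms $f\Rightarrow g$ and \emph{arbitrary} modifications $\Xi$ between their cones. But the slice $2$-category only ever records those $2$-morphisms whose whiskering with $\lambda$ is the \emph{identity} modification; condition (b) is precisely the statement that this ``identity fibre'' is trivial. It carries no information about fullness (whether every $\Xi$, and not merely $\id_\mu$, arises as some $(\lambda_*\circ\Delta)(\phi)$) nor about faithfulness over non-identity $\Xi$. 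Consequently I see no way to upgrade (b) to the hom-category isomorphism demanded by a $2$-limit, and the strategy stalls exactly at the non-trivial modifications.

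Since the slice $\sslice{\Delta}{F}$ structurally discards all non-identity whiskerings, my honest expectation is that (ii)$\Rightarrow$(i) is in fact \emph{false}: one should be able to exhibit a pair $(L,\lambda)$ that is $2$-terminal in $\sslice{\Delta}{F}$ yet fails the $2$-dimensional universal property of a $2$-limit. I would therefore pivot from proof to counterexample, searching among small indexing $2$-categories $I$ (so that cones carry genuine modification data) for an $F$ and a non-limiting $(L,\lambda)$ whose slice hom-categories nonetheless collapse to $\mathbbm{1}$ — which, in light of the paper's title, is presumably the route taken.
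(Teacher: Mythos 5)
Your diagnosis is exactly the paper's: the conjecture as stated is false, the forward implication holds, and the converse fails because the strict slice only ever records the fibre of $\lambda_*\circ\Delta$ over the identity modification. Your proof of (i)$\Rightarrow$(ii) coincides with the paper's \Cref{pos:2l2ts} -- the $1$-dimensional aspects of the two universal properties agree verbatim, and faithfulness of $\lambda_*\circ\Delta$ forces any $\phi\colon f\Rightarrow f$ whiskering to $\id_\mu$ to be $\id_f$.

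What your proposal lacks is the counterexample itself; announcing that one ``should be able to exhibit'' a $2$-terminal non-limit is a plan, not a refutation. The paper supplies this as \Cref{ce:2tsnot2l}. The indexing shape there is the pullback diagram -- a mere $1$-category, so your instinct to search among ``small indexing $2$-categories $I$ (so that cones carry genuine modification data)'' slightly overestimates what is needed: $2$-cones over a $1$-categorical diagram already admit nontrivial modifications. Concretely, one takes $\cA$ generated by two cones $\lambda*f,\lambda*g\colon\Delta X\Rightarrow F$ factoring through a cone $\lambda$ with summit $L$, together with $2$-morphisms $\gamma_0,\gamma_1$ between corresponding legs, subject only to the relations making $(\gamma_0,\gamma_1)$ a modification $\minl{\lambda*f}{\gamma}{\lambda*g}$. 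Since $\cA(X,L)$ is discrete by construction, every hom-category of the slice into $(L,\lambda)$ is $\mathbbm 1$ and $(L,\lambda)$ is $2$-terminal; but $\gamma$ is not of the form $\lambda*\Delta\phi$ for any $\phi\colon f\Rightarrow g$, so $\lambda_*\circ\Delta$ fails to be full and $(L,\lambda)$ is not a $2$-limit. It is also worth recording the paper's positive complement (\Cref{prop:tensor}): when $\cA$ admits tensors by $\mathbbm 2$ the converse does hold, which explains why no such counterexample can live in $\Cat$.
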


Although it gives the authors no great pleasure to mislead the reader so, such a conjecture is \emph{false}. While a $2$-limit is always a $2$-terminal object in the slice $2$-category of $2$-cones (see \Cref{pos:2l2ts}), the converse is not necessarily true (see \Cref{ce:2tsnot2l}). The reason for this failure is that the $2$-dimensional aspect of the universal property of a $2$-terminal object in the slice $2$-category is weaker than the $2$-dimensional aspect of the universal property of a $2$-limit. This manifests in, among other things, the inability of the slice $2$-category to detect enough modifications between two $2$-cones with the same summit.

This is not, however, the last word for this conjecture. The theory of $2$-categories affords us more room than that of categories to coherently weaken various notions. Instead of considering a $2$-category of $2$-cones where the morphisms render triangles like that of~(\ref{diag:hellotriangle}) commutative, we are free to ask that the data of a morphism comprises also the data of a general, or perhaps invertible, modification filling that triangle. This leads to the notions of \emph{lax-slice} and \emph{pseudo-slice} $2$-categories of $2$-cones, respectively. Unlike the original, \emph{strict}, slice $2$-category given above, the lax- and pseudo-slice $2$-categories detect all, or more, modifications between $2$-cones. With this in mind, we might conjecture:

\begin{conjecture} \label{conj2}
  Let $I$ and $\cA$ be $2$-categories, and let $\ainl IF\cA$ be a $2$-functor. Let $L\in\cA$ be an object and $\ninl {\Delta L}\lambda F$ be a $2$-natural transformation. The
  following two statements are equivalent:
  \begin{rome}[topsep=3pt]
  \item The pair $(L,\lambda)$ is a $2$-limit of the $2$-functor $F$.
  \item The pair $(L,\lambda)$ is a $2$-terminal object in the lax-slice (or pseudo-slice) $2$-category of $2$-cones over $F$.
  \end{rome}
\end{conjecture}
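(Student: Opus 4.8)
The plan is to try to establish both implications by transporting every condition across the comparison functor of the putative $2$-limit. Write $\Phi_X := \lambda_*\circ\Delta\colon \cA(X,L)\to [I,\cA](\Delta X,F)$ for this functor and abbreviate $\mathcal{C}_X := [I,\cA](\Delta X,F)$, the hom-category of $2$-cones out of $X$ with modifications between them. By \Cref{def:limit}, statement (i) says precisely that every $\Phi_X$ is an isomorphism of categories, acting by $f\mapsto \lambda\cdot\Delta f$ on morphisms and by $\theta\mapsto\lambda\cdot\Delta\theta$ on $2$-cells. Statement (ii) says that each hom-category $(\pslice\Delta F)((X,\mu),(L,\lambda))$, respectively $(\lslice\Delta F)((X,\mu),(L,\lambda))$, is isomorphic to $\mathbbm 1$. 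The first step, then, is to unfold these hom-categories from the definitions of the pseudo- and lax-slices and to rewrite them, using $\Phi_X$, purely in terms of the fixed object $\mu\in\mathcal{C}_X$.

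I would begin with $\text{(i)}\Rightarrow\text{(ii)}$. Unfolding the pseudo-slice, an object of $(\pslice\Delta F)((X,\mu),(L,\lambda))$ is a pair $(f,\phi)$ consisting of a morphism $\ainl XfL$ and an invertible modification $\phi$ filling the triangle~(\ref{diag:hellotriangle}), while a $2$-cell is a $2$-morphism of $\cA$ satisfying the evident whiskering identity. Applying the isomorphism $\Phi_X$ turns $f$ into an arbitrary object $c:=\Phi_X(f)$ of $\mathcal{C}_X$ and $\phi$ into an arbitrary invertible modification between $c$ and $\mu$; checking that the $2$-cells correspond as well, one finds that this hom-category is isomorphic to the category whose objects are the invertible modifications into $\mu$ and whose morphisms are the evident commuting triangles. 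For the lax-slice, whose fillers are general, the same bookkeeping yields instead the slice category $\mathcal{C}_X/\mu$ (or its coslice, according to the chosen orientation).

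This is where I expect the argument to fail, and it is the crux. The category of invertible modifications into $\mu$ is a contractible groupoid: between any two of its objects there is exactly one morphism, which is moreover invertible. It is therefore \emph{equivalent} to $\mathbbm 1$, but \emph{isomorphic} to $\mathbbm 1$ only when it has a single object -- that is, only when $\mu$ has no nontrivial automorphism as a $2$-cone and is isomorphic to no other $2$-cone. For the lax-slice the situation is worse, since $\mathcal{C}_X/\mu$ is a genuine slice category, essentially never isomorphic to $\mathbbm 1$. As $\Phi_X$ is an honest isomorphism, these hom-categories really are this large, so the discrepancy is structural: it is the gap between the strict demand ``$\iso\mathbbm 1$'' built into $2$-terminality and the up-to-(invertible-)modification flavour of the weakened slices. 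This should be contrasted with the strict slice, where the forward implication $\text{(i)}\Rightarrow\text{(ii)}$ does hold (\Cref{pos:2l2ts}) and it is rather the converse that fails (\Cref{ce:2tsnot2l}) because the strict slice is too rigid to detect modifications; the lax- and pseudo-slices err in the opposite direction by over-counting, so I expect even $\text{(i)}\Rightarrow\text{(ii)}$ to break.

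Accordingly I would stop seeking a proof and instead aim to disprove the conjecture, with the computation above indicating precisely where to look: take $I$, $\cA$ and $F$ as simple as possible so that some $2$-cone $\mu$ over $F$ carries a nontrivial invertible self-modification, the minimal case being $I=\mathbbm 1$, where a $2$-cone over $F$ is merely a morphism into the value of $F$ and such a self-modification is a nontrivial invertible $2$-cell of $\cA$. With $(L,\lambda)$ the evident $2$-limit, the previous paragraph then exhibits a pseudo-slice -- hence also a lax-slice -- hom-category with strictly more than one object, contradicting $2$-terminality. The main obstacle is thus not a calculation but the recognition that the statement is false; the real work lies in packaging the (iso-)comma computation cleanly and in pinning down a minimal, convincing witness. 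I would also record the dual observation that for the pseudo-slice, whose fillers are invertible, $2$-terminality constrains only the invertible modifications and so cannot force $\Phi_X$ to be full on non-invertible $2$-cells either, so that neither weakening repairs the correspondence.
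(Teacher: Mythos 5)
You are right that the ``statement'' is false, and your diagnosis of why agrees with the paper's: the lax- and pseudo-slices over-count morphisms into $(L,\lambda)$. But your route is genuinely different from the paper's. The paper refutes the biconditional by exhibiting explicit finite $2$-categories: \Cref{ce:2lnot2tl} (with \Cref{rem:2lnottp} for the pseudo-slice) gives a $2$-limit over the pullback shape that fails to be $2$-terminal in the lax-/pseudo-slice because a $2$-morphism $\alpha\colon f\Rightarrow g$ whiskers to a second, ``errant'' morphism $(X,\lambda*g)\to(L,\lambda)$; and \Cref{ce:tlsnot2l} (with \Cref{rem:2tpnot2l}) separately kills the converse. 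You instead compute, once and for all, what the slice hom-category $(\pslice\Delta F)\bigl((X,\mu),(L,\lambda)\bigr)$ becomes when $(L,\lambda)$ is a $2$-limit: transporting along the isomorphism $\Phi_X$ identifies it with the codiscrete (contractible) groupoid on the invertible modifications into $\mu$, and the lax version with the slice $\mathcal C_X/\mu$. This buys an exact criterion -- a $2$-limit is $2$-terminal in the pseudo-slice iff every cone $\mu$ receives only the identity invertible modification -- of which the paper's \Cref{ce:2lnot2tl} is precisely an instance, and it lets you get away with the even more minimal witness $I=\mathbbm 1$ plus one nontrivial (invertible) $2$-cell of $\cA$; it also explains in passing why a $2$-limit \emph{is} always bi-terminal in the pseudo-slice (\Cref{prop:bibi}). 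Two caveats: your claim that $\mathcal C_X/\mu$ is ``essentially never'' isomorphic to $\mathbbm 1$ should be softened (it is $\mathbbm 1$ exactly when $\mathrm{id}_\mu$ is the only modification into $\mu$, e.g.\ for locally discrete $\cA$); and you only gesture at the failure of (ii)$\Rightarrow$(i) rather than producing a witness as the paper does -- this is enough to refute the stated equivalence, since one broken implication suffices, but it does not establish the paper's stronger point that the two notions are unrelated in \emph{both} directions.
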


Unfortunately, this too is incorrect. The failure here is twofold: the pseudo-slice may still fail to detect enough modifications, as before, while simultaneously allowing too many new morphisms to appear. Similar issues plague the lax-slice, and as we will see in \Cref{sec:laxpsdslice}, $2$-terminal objects in either are generally unrelated to $2$-limits.

At this point, it is natural to ask whether the failure of \Cref{conj1,conj2} has something to do with the rigidity of the notion of $2$-limits. We might, for instance, ask that our $2$-cones have naturality triangles filled by a general $2$-morphism, or perhaps any invertible $2$-morphism, not just the identity. This leads us to consider \emph{lax-limits} and \emph{pseudo-limits}. One might imagine that these limits have special relationships with the lax- and pseudo-slices of matching cones, respectively. Specifically, one might hope that the peculiarities of these weaker notions of $2$-dimensional limit conspire somehow to support the following conjectures.

\begin{conjecture} \label{conj4}
    Let $I$ and $\cA$ be $2$-categories, and let $\ainl IF\cA$ be a $2$-functor. Let $L\in\cA$ be an object and $\ninl {\Delta L}\lambda F$ be a pseudo-natural (resp.~lax-natural) transformation. The following two statements are equivalent:
  \begin{rome}
  \item The pair $(L,\lambda)$ is a pseudo-limit (resp.~lax-limit) of the functor $F$.
  \item The pair $(L,\lambda)$ is a $2$-terminal object in the strict-/pseudo-/lax-slice $2$-category of pseudo-cones (resp.~lax-cones) over $F$.
  \end{rome}
\end{conjecture}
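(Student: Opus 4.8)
This conjecture is false as well, and the natural plan is to settle it by disproof, splitting the claimed equivalence into its two implications and invoking, for each choice of slice, one of the two failure modes already diagnosed for \Cref{conj1,conj2}: a slice $2$-category may fail to detect enough modifications between cones, and the pseudo- and lax-slices may admit too many morphisms. It suffices to produce, for every pairing of a limit notion with a slice, an obstruction in at least one direction.

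I would begin with the forward implication and first isolate the single genuinely positive case. Mirroring \Cref{pos:2l2ts}, a pseudo-limit (resp.\ lax-limit) $(L,\lambda)$ is $2$-terminal in the \emph{strict} slice of pseudo-cones (resp.\ lax-cones): since pseudo- and lax-limits are weighted $2$-limits, the object-level universal property yields, for each cone $\mu\colon\Delta X\Rightarrow F$, a unique $\ainl XfL$ with $\lambda\cdot\Delta f=\mu$, and any slice $2$-cell $f\Rightarrow f$ corresponds under the limit isomorphism to a modification $\mu\Rightarrow\mu$ that the whiskering condition forces to be the identity. The key observation is that this argument collapses for the pseudo- and lax-slices, where a morphism $(X,\mu)\to(L,\lambda)$ carries an additional (invertible) filler, so every $f$ whose cone is merely \emph{isomorphic} to $\mu$ now contributes a distinct object. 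The resulting hom-category is then equivalent, but not isomorphic, to $\mathbbm{1}$ the moment $F$ admits a non-trivial cone automorphism, and $2$-terminality --- demanding an isomorphism onto $\mathbbm{1}$ --- fails. I would pin this down with one explicit $F$ to refute $(\mathrm i)\Rightarrow(\mathrm{ii})$ for both weak slices.

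For the converse I would show that $2$-terminality in \emph{any} of the three slices is too coarse to recover the full $2$-dimensional universal property. The decisive point is that a slice hom-category only compares morphisms with the \emph{same} summit datum, so a $2$-cell $\alpha\colon f\Rightarrow g$ in $\cA$ between comparison maps inducing \emph{different} cones is simply not a slice $2$-cell, and is therefore invisible to terminality. Consequently the comparison from $\cA(X,L)$ to the category of cones over $F$ can fail to be an isomorphism while $(L,\lambda)$ remains $2$-terminal. I would construct the witness by adapting \Cref{ce:2tsnot2l}: arrange a diagram in which $\cA(X,L)$ carries a $2$-cell unmatched by any modification of cones, and check that terminality survives precisely because that $2$-cell changes the cone.

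The step I expect to be most delicate is the bookkeeping across all six combinations of limit notion and slice while keeping the witnesses explicit, compounded by the extra care the lax variants require: the non-invertible fillers of a lax-slice enlarge the relevant hom-categories still further and interact with the lax naturality of the cones, so I would have to verify separately that these new morphisms neither repair the forward implication nor shrink the hom-categories enough to rescue the converse.
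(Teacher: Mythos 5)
Your high-level verdict and architecture agree with the paper: the only surviving implication is that a pseudo-/lax-limit is $2$-terminal in the \emph{strict}-slice of matching cones (\Cref{rem:pllltss}), and every other arrow of the conjecture must be killed by an explicit counter-example. But the proposal remains a plan exactly where the paper has to work hardest, and the one concrete construction strategy you commit to is the one the paper shows cannot work. For the converse direction you propose ``adapting \Cref{ce:2tsnot2l}''; in the lax-cone case this adaptation fails, because the $2$-morphisms $\gamma_0,\gamma_1$ there generate new \emph{lax-cones} with summit $X$ admitting no morphism to $(L,\lambda)$ in the strict-slice of lax-cones, so $(L,\lambda)$ is no longer $2$-terminal there. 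Producing a $2$-terminal object in the strict-slice of lax-cones that is not a lax-limit forces a genuinely different indexing $2$-category (the free category on $x\colon 0\to 1$ and $y\colon 1\to 0$ in \Cref{ce:2tssnotll}), chosen precisely so that modifications between lax-cones cannot be repackaged as lax-cones. Your plan gives no hint of this obstruction or its resolution.

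Two further gaps. First, your forward-direction mechanism (a non-trivial cone automorphism making the pseudo-slice hom-category equivalent but not isomorphic to $\mathbbm{1}$) is not instantiated by any $F$, and it is not the paper's mechanism: \Cref{ce:2lnot2tl,ce:llnot2tls} use a $2$-cell $\alpha\colon f\Rightarrow g$ between \emph{distinct} comparison maps whose whiskering supplies an extra filler, and in the lax-limit case one must additionally impose $f\alpha_0=f\alpha_1$ and $f*\alpha=\id$ so that the extra lax-cone created by $\alpha$ stays in the image of $\lambda_*\circ\Delta$ (otherwise $(L,\lambda)$ ceases to be a lax-limit, which is exactly why \Cref{ce:2lnot2tl} cannot simply be reused). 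Second, your converse paragraph diagnoses only undetected modifications (failure of surjectivity on morphisms), which is the strict-slice phenomenon; the decisive counter-examples for the lax- and pseudo-slices (\Cref{ce:tlsnot2l,ce:2tlnotll}) instead exhibit a cone that maps to $(L,\lambda)$ through a non-identity filler yet is not of the form $\lambda\Delta f$, i.e., a failure of surjectivity on \emph{objects}. Without these explicit constructions the refutation of the conjecture is not complete.
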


As in the case of $2$-limits, pseudo- and lax-limits are in particular $2$-terminal objects in the strict-slice of appropriate cones (see \Cref{rem:pllltss}). However, all other implications are generally false, as established in \Cref{sec:psdlaxlimit}.

We might then ask whether the failure of \Cref{conj1,conj2,conj4} has had, all along, something to do with the rigidity of universal properties expressed by isomorphisms of categories. One might, on this view, hope to generate analogous and valid conjectures by weakening these isomorphisms to equivalences of categories -- conjectures concerning \emph{bi}-type limits and \emph{bi}-terminal objects. However, as discussed in \Cref{sec:bilimits}, even these do not hold.

Finally, one might wonder about the case of weighted $2$-limits, which is a well-established notion for limits in enriched category theory in the literature. The theory of weighted limits was developed by Auderset \cite{Auderset}, Street \cite{Street1976}, and Kelly \cite{Kelly1989} in the case of $2$-categories, and by Borceux-Kelly \cite{BorKel} as well as Kelly \cite[Chapter~3]{KellyBook} in the case of general enriched categories. However, as conical $2$-limits, pseudo-limits, and lax-limits are special cases of such weighted $2$-limits as noted in \cite[\S 3 and \S 5]{Kelly1989}, we can see that the analogues of \cref{conj1,conj2,conj4} for weighted $2$-limits must in general fail too.

\subsection{Outline}\label{sec:summary}

The structure of the paper is as follows.
In \cref{sec:2limits}, we introduce the notions of $2$-limits and strict-slices of $2$-cones. We prove that a $2$-limit is always $2$-terminal in the strict-slice of $2$-cones, but we provide a counter-example demonstrating that the converse fails in general. However, for the converse to hold, it is sufficient for the ambient $2$-category to admit tensors by $\mathbbm 2$ -- as is the case of the $2$-category \Cat{}. In \cref{sec:laxpsdslice}, we turn our attention to the larger $2$-categories of pseudo- and lax-slices of $2$-cones. We provide counter-examples demonstrating that $2$-limits are in fact unrelated to $2$-terminal objects in these -- neither notion generally implies the other. In \cref{sec:psdlaxlimit}, we introduce pseudo- and lax-limits, and investigate their relationships with $2$-terminal objects in the different slices. Finally, in \cref{sec:bilimits}, we address the case of bi-type limits. We show that these are in particular always bi-terminal in the pseudo-slice of appropriate cones, and then adapt the results we have for the $2$-type cases to the bi-type cases.

All of the counter-examples presented in this paper, with the exception of \Cref{ce:2tssnotll}, are indexed by finite ($1$-)categories: $\mathbbm 2$ and the pullback shape specifically. These counter-examples were generally constructed to exhibit certain data, for example a modification that is not detected by the strict-slice or a $2$-morphism which gives an errant morphism in the lax-slice. The resulting diagram shapes were inessential to this process, and there are certainly many more counter-examples yet.

In the tables below, we summarise our counter-examples and reductions for \cref{conj1,conj2,conj4} -- see \Cref{sec:bilimits} for the matching tables for the bi-type conjectures. Only results marked with a \valid{} are true, everything else establishes a counter-example.
Note that the objects in the slices considered vary by the column: the type of objects should match the type of the limit cone.

\begin{table}[h!]
  \centering
  \caption{$2$-type limits which are not $2$-terminal}
  \begin{tabular}{*{3}{c|}c}
    $\to$ implies $2$-terminal in $\downarrow$ & $2$-limit & Pseudo-limit & Lax-limit\\\hline
    Strict-slice & \valid{} \cref{pos:2l2ts} & \valid{} \cref{rem:pllltss} & \valid{} \cref{rem:pllltss} \\
    Lax-slice & \cref{ce:2lnot2tl} & \Cref{red:plnot2tl} & \cref{ce:llnot2tls} \\
     Pseudo-slice & \Cref{rem:2lnottp} & \Cref{red:llplnot2tp} & \cref{red:llplnot2tp}
  \end{tabular}
\end{table}

\begin{table}[h!]
  \centering
  \caption{$2$-terminal objects which are not $2$-type limits}
  \begin{tabular}{*{3}{c|}c}
    $2$-terminal in $\downarrow$ implies $\to$ & $2$-limit & Pseudo-limit & Lax-limit \\\hline
    Strict-slice & \cref{ce:2tsnot2l} & \cref{rem:tsnotllpl} & \cref{ce:2tssnotll} \\
    Lax-slice & \cref{ce:tlsnot2l} & \cref{rem:pseudobad} & \cref{ce:2tlnotll} \\
    Pseudo-slice & \cref{rem:2tpnot2l} & \cref{ce:2tpnotpl} & \cref{ce:2tpnotpl}
  \end{tabular}
\end{table}

In consulting these tables, some readers might be confused that the adjectives ``pseudo'' and ``lax'' do not appear in the same order in the rows as they appear in the columns. We should be careful to consider that these adjectives play very different roles when attached to the various slice $2$-categories as they do when attached to a notion of limit. Adding these adjectives to the labels of the rows changes the \emph{morphisms} of the slices, while adding these adjectives to the labels of the columns changes the \emph{type of the cones}, i.e.~it changes the \emph{objects} of the slices. The unexpected ordering of the columns of the tables has been chosen to be this way, since all counter-examples for pseudo-slices are reductions of counter-examples for lax-slices.

\subsection{Positive results for characterisations of 2-dimensional limits}

We may always view a $2$-category as a horizontal double category with only trivial vertical morphisms, and in the double categorical setting we are now afforded a stronger notion of terminality. In this broader context, Grandis-Paré show in \cite{GraPar1999,GraPar2019} that (weighted) $2$-limits of a $2$-functor~$F$ are equivalently \emph{double terminal} objects in the double category of (weighted) cones over the horizontal double functor induced by $F$; see also \cite[\S 5.6]{Grandis}. Similar work in this direction is done by Verity in his thesis \cite{Verity}. 
With this proliferation of positive results, it is surprising that the failures of \cref{conj1,conj2,conj4} are not documented in the literature. Grandis-Paré are certainly aware of such a failure as they write the following in their recent paper \cite{GraPar2019}:
\begin{quote}
        On the other hand, there seems to be no natural way of expressing the $2$-dimensional universal property of weighted (strict or pseudo) limits by terminality in a $2$-category.
\end{quote}
Unfortunately however, Grandis-Par\'e do not record their formulation of the ``natural way'' nor whatever obstacles they encountered. We feel that \cref{conj1,conj2,conj4} express a natural expectation of the relationship between $2$-limits and $2$-terminal objects in a $2$-category, and we hope that our counter-examples illustrate clearly the failure of all such conjectures.

Closer examination of these counter-examples reveals the need to capture additional information not present in the slice $2$-category of cones. The double categorical approach of Grandis, Par\'e, and Verity certainly suffices for this task, but in our paper \cite{tslil} we give a purely $2$-categorical characterisation of $2$-limits by constructing two different slice $2$-categories of cones which have the joint property that objects which are simultaneously $2$-terminal in both correspond precisely to $2$-limits. One of these slice $2$-categories is predictably the slice $2$-category of cones, but in fact the other slice $2$-category alone succeeds in precisely characterising $2$-limits through bi-initial objects of a specific form. This second slice $2$-category, however, is a shifted version of the usual slice $2$-category of cones: its \emph{objects} are modifications between cones. An advantage of this approach will be highlighted in forthcoming work by the second author, where a notion of $(\infty,2)$-limits can then be defined in a fully $(\infty,2)$-categorical language without requiring the development of the accompany theory of double $(\infty,1)$-categories.

The counter-examples in this paper are indicative of a larger failure in the extension of $1$-categorical theorems to the setting of $2$-category theory. More generally, the existence and characterisations of bi-limits may be viewed as an instance of the corresponding problems for \emph{bi-representations} of general pseudo-presheaves, and it is here that the analogy breaks down: while a representation for a presheaf corresponds to an initial object in the category of elements, the data of a bi-representation for a pseudo-presheaf is not wholly captured by a bi-initial object in the $2$-category of elements.

At the level of $2$-dimensional representations, in \cite{tslil} we weaken the strict setting to that of pseudo-functors and pseudo-natural transformations and generalise the results of Grandis, Par\'e, and Verity to the case of bi-representations. In particular we give a double categorical characterisation of bi-re\-pre\-sen\-ta\-tions of pseudo-presheaves in terms of double bi-initial objects in the double category of elements. Furthermore, we succeed in providing a purely $2$-categorical characterisation of bi-representations in terms of objects which are simultaneously bi-initial in the familiar $2$-category of elements and in a new $2$-category of \emph{morphisms}. In fact, we are able to demonstrate that bi-representations can actually be characterised as bi-initial objects of a specific form in the $2$-category of morphisms alone. These results are the content of \cite[Theorem 6.8]{tslil}. The counter-examples of this paper establish the necessity of the presence of both $2$-categories in the theorems there, as bi-limits are bi-representations. As a corollary of these theorems we obtain a purely $2$-categorical characterisation of weighted bi-limits in \cite[Theorem 7.19]{tslil}. 

Finally, the positive results of \Cref{prop:tensor,prop:bitensor} are special cases of more general results for bi-representations: \cite[Theorem 6.14]{tslil} shows that in the presence of tensors by $\mathbbm 2$, if the pseudo-presheaf preserves such tensors, then bi-representations are precisely bi-initial objects in the $2$-category of elements.

\subsection{Acknowledgements}

Both authors are indebted to Emily Riehl for her close readings of and thoughtful inputs on several early drafts of this paper. In addition, both authors are grateful to J\'er\^ome Scherer for his careful input on an early draft. Finally, both authors also wish to extend their gratitude to Alexander Campbell and Emily Riehl for their enthusiasm for what became \Cref{ce:2tlnotll}, which provided the impetus to write this paper.

This work was realised while both authors were at the Mathematical Sciences Research Institute in Berkeley, California, during the Spring 2020 semester. The first-named author benefited from support by the National Science Foundation under Grant No. DMS-1440140, while at residence in MSRI. The second-named author was supported by the Swiss National Science Foundation under the project P1ELP2\_188039. The first-named author was additionally supported by the National Science Foundation grant DMS-1652600, as well as the JHU Catalyst Grant.

\section{\texorpdfstring{$2$}{2}-limits do not correspond to \texorpdfstring{$2$}{2}-terminal objects in the strict-slice} \label{sec:2limits}

In this section, we start by comparing $2$-limits with $2$-terminal objects in the strict-slice $2$-category of $2$-cones. After introducing all the terms involved, we show that a $2$-limit is in particular a $2$-terminal object in the strict-slice, but we provide a counter-example for the other implication. However, when the ambient $2$-category admits tensors by $\mathbbm{2}$, such as is the case of $\Cat$, these two notions do coincide.

A $2$-category has not only the structure of a category, with objects and morphisms, but additionally has $2$-morphisms between parallel morphisms. These $2$-morphisms may be composed both vertically, along a common morphism boundary, and horizontally, along a common object boundary. To differentiate these two types of composition operations on $2$-morphisms, we write $*$ for horizontal composition and use juxtaposition to denote vertical composition. A $2$-functor between $2$-categories comprises maps of objects, morphisms, and $2$-morphisms which preserves all $2$-categorical structures strictly. There are also notions of $2$- and $3$-morphisms between $2$-categories, which we introduce now.

\begin{definition} \label{def:2nat} Let $F,G\colon I\to \cA$ be
  $2$-functors. A \textbf{$2$-natural transformation}
  $\mu\colon F\Rightarrow G$ comprises the data of a morphism
  $\mu_i\colon Fi\to Gi$ of $\cA$ for each $i\in I$, which must
  satisfy
  \begin{enumerate}
  \item for all morphisms $\ainl ifj$ of $I$, we have
    $(Gf)\mu_i=\mu_j(Ff)$, and
  \item for all $2$-morphisms $\ninl f\alpha g$ of $I$, we have
    $G\alpha *\mu_i=\mu_j*F\alpha$.
  \end{enumerate}
  \begin{diagram*}[node distance=1.7cm]
    \node(1)[]{$Fi$};
    \node(2)[right of= 1]{$Gi$};
    \node(3)[right of= 2]{$Gj$};
    \draw[a](1)to node[la]{$\mu_{i}$}(2);
    \draw[a,bend left=40](2)to node(t1)[la]{$Gf$}(3);
    \draw[a,bend right=40](2)to node(b1)[la,swap]{$Gg$}(3);

    \coordinate (t1') at ($(t1)-(0.1cm,0)$);
    \coordinate (b1') at ($(b1)-(0.1cm,0)$);
    \cell[la,xshift=1pt]{t1'}{b1'}{$G\alpha$};

    \node(1)[right of=3]{$Fi$};
    \node at ($(3)!0.5!(1)$) {$=$};
    \node(2)[right of= 1]{$Fj$};
    \node(3)[right of= 2]{$Gj$};
    \draw[a](2)to node[la]{$\mu_{j}$}(3);
    \draw[a,bend left=40](1)to node(t1)[la]{$Ff$}(2);
    \draw[a,bend right=40](1)to node(b1)[la,swap]{$Fg$}(2);

    \coordinate (t1') at ($(t1)-(0.1cm,0)$);
    \coordinate (b1') at ($(b1)-(0.1cm,0)$);
    \cell[la,xshift=1pt]{t1'}{b1'}{$F\alpha$};
  \end{diagram*}
\end{definition}

\begin{definition} \label{def:strictmod} Let $\ainl {I}{F,G}{\cA}$ be
  $2$-functors and let $\mu,\nu\colon F\Rightarrow G$ be $2$-natural
  transformations. A \textbf{modification} $\minl{\mu}{\varphi}{\nu}$
  comprises the data of a $2$-morphism
  $\varphi_i\colon \mu_i\Rightarrow \nu_i$ for each $i\in I$, which
  satisfy $Gf*\varphi_i=\varphi_j*Ff$, for all morphisms
  $f\colon i\to j$ of $I$.
  \begin{diagram*}[node distance=1.7cm]
      \node(1)[]{$Fi$};
      \node(2)[right of= 1]{$Gi$};
      \node(3)[right of= 2]{$Gj$};
      \draw[a](2)to node[la]{$Gf$}(3);
      \draw[a,bend left=40](1)to node(t1)[la]{$\mu_{i}$}(2);
      \draw[a,bend right=40](1)to node(b1)[la,swap]{$\nu_{i}$}(2);

      \coordinate (t1') at ($(t1)-(0.1cm,0)$);
      \coordinate (b1') at ($(b1)-(0.1cm,0)$);
      \cell[la,xshift=1pt]{t1'}{b1'}{$\varphi_{i}$};

      \node(1)[right of=3]{$Fi$};
      \node at ($(3)!0.5!(1)$) {$=$};
      \node(2)[right of= 1]{$Fj$};
      \node(3)[right of= 2]{$Gj$};
      \draw[a](1)to node[la]{$Ff$}(2);
      \draw[a,bend left=40](2)to node(t1)[la]{$\mu_{j}$}(3);
      \draw[a,bend right=40](2)to node(b1)[la,swap]{$\nu_{j}$}(3);

      \coordinate (t1') at ($(t1)-(0.1cm,0)$);
      \coordinate (b1') at ($(b1)-(0.1cm,0)$);
      \cell[la,xshift=1pt]{t1'}{b1'}{$\varphi_{j}$};
  \end{diagram*}
\end{definition}

With these definitions, $2$-functors, $2$-natural transformations, and modifications assemble into a $2$-category.

\begin{notation}
  Let $I$ and $\cA$ be $2$-categories. We denote by $[I,\cA]$ the
  $2$-category of $2$-functors $I\to \cA$, $2$-natural transformations
  between them, and modifications.
\end{notation}

We are now ready to define $2$-dimensional limits.

\begin{definition} \label{def:limit} Let $I$ and $\cA$ be
  $2$-categories, and let $F\colon I\to \cA$ be a $2$-functor. A
  \textbf{$2$-limit} of $F$ comprises the data of a object $L\in \cA$
  together with a $2$-natural transformation $\lambda\colon \Delta L\Rightarrow F$, which are such that, for each
  object $X\in\cA$, the functor
  \[ \ainl{\cA(X,L)}{\lambda_*\circ\Delta}{[I,\cA](\Delta X, F)} \]
  given by post-composition with $\lambda$ is an isomorphism of
  categories.
\end{definition}

In what follows, we call a $2$-natural transformation $\Delta X\Rightarrow F$ from a constant functor a \emph{$2$-cone over $F$}.

\begin{remark} \label{rem:univprop}
  There are two aspects of the universal property of a $2$-limit, which arise from the isomorphism of categories
  $\ainl{\cA(X,L)}{\lambda_*\circ\Delta}{[I,\cA](\Delta X, F)}$
  at the level of objects and at the level of morphisms. We reformulate this more explicitly as follows. For every $X\in \cA$,
  \begin{enumerate}
      \item  for every $2$-cone $\mu\colon \Delta X\Rightarrow F$, there is a unique morphism $f_{\mu}\colon X\to L$ in $\cA$ such that $\lambda\Delta f_{\mu}=\mu$,
      \item for every modification $\minl{\mu}{\varphi}{\nu}$ between $2$-cones $\mu,\nu\colon \Delta X\Rightarrow F$, there is a unique $2$-morphism $\alpha\colon f_{\mu}\Rightarrow f_{\nu}$ in $\cA$ such that $\lambda*\Delta\alpha=\varphi$.
      \begin{diagram}[node distance=1.7cm]\label{diag:2limituni}
      \node(4)[]{$\Delta X$};
      \node(5)[right of= 4]{$\Delta L$};
      \node(6)[right of= 5]{$F$};
      \node(1)[right of=6]{$\Delta X$};
      \node(2)[right of= 1]{$F$};
      \draw[n,bend left=40](1)to node(m)[la,above]{$\mu$}(2);
      \draw[n,bend right=40](1)to node(n)[la,below]{$\nu$}(2);
      \draw[n,bend left=40](4)to node(t)[la]{$\Delta f_{\mu}$}(5);
      \draw[n,bend right=40](4)to node(b)[swap,la]{$\Delta f_{\nu}$}(5);
      \draw[n](5)to node[la]{$\lambda$}(6);

      \cell[la,xshift=1pt][t]{m}{n}{$\varphi$};

      \coordinate (n) at ($(t)-(0.2cm,0)$);
      \coordinate (m) at ($(b)-(0.2cm,0)$);
      \cell[la,xshift=1pt][t]{n}{m}{$\Delta\alpha$};

      \node at ($(6)!0.5!(1)$) {$=$};
    \end{diagram}
  \end{enumerate}
\end{remark}

We now define strict-slice $2$-categories of $2$-cones and $2$-terminal objects.

\begin{definition} \label{def:strictslice}
Let $F\colon I\to \cA$ be a $2$-functor. The \textbf{strict-slice} $\sslice \Delta F$ of $2$-cones over
  $F$ is defined to be the following pullback in the ($1$-)category of
  $2$-categories and $2$-functors.
  \begin{diagram*}
    \node(1)[]{$\sslice \Delta F$};
    \node(2)[below of= 1]{$\cA\times \mathbbm{1}$};
    \node(3)[right of= 2, xshift=1.5cm]{$[I,\cA]\times [I,\cA]$};
    \node(4)[above of= 3]{$[\mathbbm{2},[I,\cA]]$};
    \draw[a](1)to (2);
    \draw[a](2)to node[swap,la]{$(\Delta,F)$}(3);
    \draw[a](1)to (4);
    \draw[a](4) to node[la]{$(s,t)$}(3);
    \pullback{1};
  \end{diagram*}
  This $2$-category $\sslice \Delta F$ is given by the following data:
  \begin{rome}
  \item an object in $\sslice \Delta F$ is a pair $(X,\mu)$ of an
    object $X\in \cA$ together with a $2$-natural transformation
    $\mu\colon \Delta X\Rightarrow F$,
  \item a morphism $f\colon (X,\mu)\to (Y,\nu)$ consists of
    a morphism $f\colon X\to Y$ in $\cA$ such that $\nu\Delta f=\mu$,
  \item a $2$-morphism
    $\alpha\colon f\Rightarrow g$ between morphisms $f,g\colon (X,\mu)\to (Y,\nu)$ is a
    $2$-morphism $\alpha\colon f\Rightarrow g$ in $\cA$ such that $\lambda*\Delta\alpha=\id_{\mu}$.
      \begin{diagram}[node distance=1.7cm]\label{diag:2termuni}
      \node(4)[]{$\Delta X$};
      \node(5)[right of= 4]{$\Delta L$};
      \node(6)[right of= 5]{$F$};
      \node(1)[right of=6]{$\Delta X$};
      \node(2)[right of= 1]{$F$};
      \draw[n](1)to node(m)[la]{$\mu$}(2);
      \draw[n,bend left=40](4)to node(t)[la]{$\Delta f$}(5);
      \draw[n,bend right=40](4)to node(b)[swap,la]{$\Delta g$}(5);
      \draw[n](5)to node[la]{$\lambda$}(6);

      \coordinate (n) at ($(t)-(0.2cm,0)$);
      \coordinate (m) at ($(b)-(0.2cm,0)$);
      \cell[la,xshift=1pt][t]{n}{m}{$\Delta\alpha$};

      \node at ($(6)!0.5!(1)$) {$=$};
    \end{diagram}
  \end{rome}
\end{definition}

\begin{definition} \label{def:2terminal}
  Let $\cA$ be a $2$-category. An object $L\in \cA$ is
  \textbf{$2$-terminal} if for all $X\in \cA$ there is an isomorphism
  of categories $\cA(X,L)\cong \mathbbm{1}$.
\end{definition}

As for $2$-limits, there are also two aspects of the universal property of a $2$-terminal object. Since we are interested here by $2$-terminal objects in a strict-slice $2$-category of $2$-cones, we give a more explicit description of their universal property. We will then compare this description with the universal property of $2$-limits (c.f.~\Cref{rem:univprop}).

\begin{remark} \label{rem:univ2terminal}
  Given a $2$-functor $F\colon I\to \cA$, we describe the two aspects of the universal property of a $2$-terminal object in the strict-slice $\sslice \Delta F$. Such a $2$-terminal object comprises the data of an object $L\in \cA$ together with a $2$-cone $\lambda\colon \Delta L\Rightarrow F$ which satisfy the following:
  \begin{enumerate}
      \item for every $X\in \cA$ and every $2$-cone $\mu\colon \Delta X\Rightarrow F$, there is a unique morphism $f_{\mu}\colon X\to L$ in $\cA$ such that $\lambda \Delta f_{\mu}=\mu$,
      \item for every $X\in \cA$ and every $2$-cone $\mu\colon \Delta X\Rightarrow F$, the unique $2$-morphism $f_{\mu}\Rightarrow f_{\mu}$ in $\sslice \Delta F$ is the identity~$\id_{f_{\mu}}$.
  \end{enumerate}
\end{remark}

In particular, we can see that the $2$-dimensional aspect above seems somehow degenerate in comparison to (2) of \Cref{rem:univprop}. However, the $1$-dimensional aspect is the same as the one expressed in \Cref{rem:univprop} (1). This gives the following result.

\begin{proposition}\label{pos:2l2ts}
  Let $I$ and $\cA$ be $2$-categories, and let $\ainl IF\cA$ be a
  $2$-functor. If $(L,\lambda\colon \Delta L\Rightarrow F)$ is a $2$-limit
  of $F$, then $(L,\lambda)$ is $2$-terminal in the strict-slice
  $\sslice {\Delta} F$ of $2$-cones over $F$.
\end{proposition}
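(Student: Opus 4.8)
The plan is to fix an arbitrary object $(X,\mu)$ of the strict-slice $\sslice{\Delta}{F}$ and show directly that the hom-category $\sslice{\Delta}{F}\big((X,\mu),(L,\lambda)\big)$ is isomorphic to the terminal category $\mathbbm 1$, as required by \Cref{def:2terminal}. Since $(L,\lambda)$ is a $2$-limit of $F$, the functor $\ainl{\cA(X,L)}{\lambda_*\circ\Delta}{[I,\cA](\Delta X,F)}$ is an isomorphism of categories, and I would extract from this both a bijection on objects and a bijection on hom-sets, corresponding to the two aspects of the universal property recorded in \Cref{rem:univprop}. The strategy is then simply to match these two aspects against the two conditions describing $2$-terminality in the strict-slice in \Cref{rem:univ2terminal}.

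For the $1$-dimensional aspect, I would apply \Cref{rem:univprop}(1) to the $2$-cone $\mu\colon \Delta X\Rightarrow F$ to obtain a unique morphism $f_\mu\colon X\to L$ of $\cA$ satisfying $\lambda\Delta f_\mu=\mu$. By \Cref{def:strictslice}(ii) this $f_\mu$ is precisely a morphism $(X,\mu)\to(L,\lambda)$ in the slice, and its uniqueness says that it is the only such morphism. Hence the hom-category $\sslice{\Delta}{F}\big((X,\mu),(L,\lambda)\big)$ has exactly one object.

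It then remains to show that this unique object $f_\mu$ has no non-identity endomorphisms in the slice. By \Cref{def:strictslice}(iii), an endomorphism of $f_\mu$ in the slice is a $2$-morphism $\alpha\colon f_\mu\Rightarrow f_\mu$ of $\cA$ satisfying $\lambda*\Delta\alpha=\id_\mu$. Now the identity $\id_{f_\mu}$ is one such $2$-morphism, since $\lambda*\Delta\id_{f_\mu}=\lambda*\id_{\Delta f_\mu}=\id_{\lambda\Delta f_\mu}=\id_\mu$. On the other hand, applying \Cref{rem:univprop}(2) to the identity modification $\id_\mu\colon\mu\threecellarr\mu$ -- which is legitimate because $\lambda\Delta f_\mu=\mu$ on both sides -- yields a \emph{unique} $2$-morphism $f_\mu\Rightarrow f_\mu$ whose whiskering with $\lambda$ is $\id_\mu$. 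By this uniqueness, any such $\alpha$ must equal $\id_{f_\mu}$, which is exactly condition (2) of \Cref{rem:univ2terminal}. Therefore the hom-category has a single object and only its identity endomorphism, so it is isomorphic to $\mathbbm 1$, and $(L,\lambda)$ is $2$-terminal in $\sslice{\Delta}{F}$.

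The argument is essentially a matter of unwinding definitions, and I do not anticipate a genuine obstacle; the only point requiring care is the observation that the degenerate $2$-dimensional aspect of $2$-terminality (\Cref{rem:univ2terminal}(2)) is exactly the specialisation of the full $2$-dimensional universal property of the $2$-limit (\Cref{rem:univprop}(2)) to the identity modification. Verifying that bijectivity of $\lambda_*\circ\Delta$ on the relevant hom-set forces $\alpha=\id_{f_\mu}$ is the crux, and it is precisely here that one begins to see why the converse implication will be more delicate: $2$-terminality only constrains $2$-morphisms lying over \emph{identity} modifications, whereas the $2$-limit property constrains those lying over \emph{all} modifications.
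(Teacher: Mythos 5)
Your proposal is correct and follows essentially the same route as the paper's proof: the $1$-dimensional aspects of the two universal properties are matched directly, and any slice $2$-morphism $\alpha\colon f_\mu\Rightarrow f_\mu$ is forced to equal $\id_{f_\mu}$ by applying the uniqueness clause of the $2$-limit's $2$-dimensional universal property to the identity modification. Your closing observation about why the converse fails is likewise exactly the diagnosis the paper gives.
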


\begin{proof}
  By \Cref{rem:univprop} (1) and \Cref{rem:univ2terminal} (1), we observe that the $1$-dimensional aspects of the universal property of a $2$-limit and of a $2$-terminal object in the strict-slice coincide. Both say that, for every $X\in \cA$ and every $2$-cone $\mu\colon \Delta X\Rightarrow F$, there exists a unique morphism $f_{\mu}\colon X\to L$ in $\cA$ such that $\lambda\Delta f_{\mu}=\mu$.

  It remains to show (2) of \Cref{rem:univ2terminal}, that is, that the unique $2$-morphism $f_{\mu}\Rightarrow f_{\mu}$ in $\sslice \Delta F$ is the identity $\id_{f_{\mu}}$. Any $2$-morphism $\ninl{f_{\mu}}{\alpha}{f_{\mu}}$ in $\sslice \Delta F$ must satisfy $\lambda*\Delta\alpha=\id_{\lambda\Delta f}$ by \Cref{def:strictslice} (iii). In particular, we also have $\lambda*\Delta\id_f=\id_{\lambda\Delta f}=\lambda*\Delta\alpha$. By the uniqueness in \Cref{rem:univprop} (2), it follows that $\alpha=\id_{f}$.
\end{proof}

However, it is not true that every $2$-terminal object in the strict-slice of $2$-cones is a $2$-limit. One reason for this is that the strict-slice only sees the identity modifications between $2$-cones (compare (\ref{diag:2termuni}) with (\ref{diag:2limituni})). With this in mind, to illustrate this failure we give an example of a modification between two $2$-cones which does not arise from a $2$-morphism.

\begin{ce}\label{ce:2tsnot2l}
  Let $I$ be the pullback shape diagram $\{ \bullet \longrightarrow\bullet \longleftarrow \bullet\}$. Let $\cA$ be the $2$-category generated by the data
  \begin{diagram*}
    \node(1)[]{$X$};
    \node(2)[right of= 1]{$L$};
    \node(3)[below right of= 1]{$L$};
    \node(4)[below right of= 2]{$B$};
    \node(5)[below of= 3]{$C$};
    \node(6)[below of= 4]{$A$};
    \draw[a](1)to node[la]{$g$}(2);
    \draw[a](1)to node[swap,la]{$f$}(3);
    \draw[a](2)to node[la]{$\lambda_{0}$}(4);
    \draw[a](3)to node[la,over]{$\lambda_{0}$}(4);
    \draw[a](3)to node[swap,la]{$\lambda_{1}$}(5);
    \draw[a](4)to node[la]{$b$}(6);
    \draw[a](5)to node[swap,la]{$c$}(6);
    \cell[la,near start]{3}{2}{$\gamma_{0}$};
    \node(7)[right of= 2,xshift=2cm]{$X$};
    \node(8)[below of= 7]{$L$};
    \node(9)[below right of= 7]{$L$};
    \node(10)[right of= 9]{$B$};
    \node(11)[below of= 9]{$C$};
    \node(12)[below of= 10]{$A$};
    \draw[a](7)to node[swap,la]{$f$}(8);
    \draw[a](7)to node[la]{$g$}(9);
    \draw[a](8)to node[swap,la]{$\lambda_{1}$}(11);
    \draw[a](9)to node[la,over]{$\lambda_{1}$}(11);
    \draw[a](9)to node[la]{$\lambda_{0}$}(10);
    \draw[a](10)to node[la]{$b$}(12);
    \draw[a](11)to node[swap,la]{$c$}(12);
    \cell[la]{8}{9}{$\gamma_{1}$};
    \node at ($(7)!0.5!(6)$) {$=$};
  \end{diagram*}
  subject to the relations $b\lambda_0=c\lambda_1$ and $b*\gamma_0=c*\gamma_1$.
  Take $F\colon I\to \cA$ to be the diagram
  \begin{center}
  \begin{tikzpicture}[node distance=1.3cm, baseline=(0.base)]
      \node(1)[]{$B$};
      \node(0)[below of= 1]{$A$};
      \node(2)[left of= 0]{$C$};
      \draw[a] (2) to node[swap,la]{$c$} (0);
      \draw[a] (1) to node[la]{$b$} (0);
    \end{tikzpicture} .
    \end{center}
\end{ce}

\begin{claim}
The object $(L,\ninl{\Delta L}{\lambda}{F})$ is $2$-terminal in the strict-slice $\sslice \Delta F$ of $2$-cones over~$F$, but the functor
\[ \ainl{\cA(X,L)}{\lambda_*\circ\Delta}{[I,\cA](\Delta X,F)} \] given
by post-composition with $\lambda$ is not surjective on morphisms,
i.e.~$(L,\lambda)$ is not a $2$-limit of $F$.
\end{claim}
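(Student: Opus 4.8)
The plan is to make the generated $2$-category \cA{} entirely explicit and then read off both assertions by finite inspection, exploiting that \cA{} has very few cells. First I would record the underlying category. As the generating morphisms form a loop-free directed graph, every morphism of \cA{} is a composable path, subject only to $b\lambda_0=c\lambda_1$; thus $\cA(X,L)=\{f,g\}$, $\cA(X,B)=\{\lambda_0 f,\lambda_0 g\}$, $\cA(X,C)=\{\lambda_1 f,\lambda_1 g\}$, and $\cA(X,A)=\{b\lambda_0 f,b\lambda_0 g\}$, where in the last case the two composites are \emph{distinct}: the only relation among parallel composites identifies $b\lambda_0$ with $c\lambda_1$ and so never equates an $f$-composite with a $g$-composite. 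For the $2$-cells, the only generators are $\gamma_0\colon\lambda_0 f\Rightarrow\lambda_0 g$ and $\gamma_1\colon\lambda_1 f\Rightarrow\lambda_1 g$; since the domain $X$ carries no non-identity endomorphism, these can only be whiskered on the right, by $b$ and $c$. Consequently the hom-categories $\cA(X,B)$, $\cA(X,C)$ and $\cA(X,A)$ are each isomorphic to the walking arrow $\mathbbm 2$ --- with sole non-identity $2$-cell $\gamma_0$, $\gamma_1$, and $b*\gamma_0=c*\gamma_1$ respectively --- while every other hom-category, and in particular $\cA(X,L)$, is discrete. The decisive feature is that \cA{} contains \emph{no} $2$-cell $f\Rightarrow g$.

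Next I would establish $2$-terminality of $(L,\lambda)$ in $\sslice\Delta F$ via \Cref{rem:univ2terminal}, first noting that the slice has only three objects. Because $I$ is a $1$-category, a $2$-cone $\Delta X'\Rightarrow F$ is exactly a commuting cone $(\mu_1\colon X'\to B,\ \mu_3\colon X'\to C)$ with $b\mu_1=c\mu_3$; scanning the five objects of \cA{}, such a cone can exist only when both $\cA(X',B)$ and $\cA(X',C)$ are inhabited, which happens solely for $X'=X$ and $X'=L$. This yields precisely $(X,\lambda\Delta f)$, $(X,\lambda\Delta g)$ and $(L,\lambda)$. It then remains to check that each hom-category into $(L,\lambda)$ is $\mathbbm 1$. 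On objects, this is the $1$-dimensional universal property: from $\cA(X,L)=\{f,g\}$ together with $\lambda_0 f\neq\lambda_0 g$ one sees that the unique slice morphisms to $(L,\lambda)$ are $\id_L$, $f$ and $g$ respectively. On $2$-cells, discreteness of $\cA(L,L)$ and $\cA(X,L)$ forces the only candidate $2$-morphisms to be identities, which satisfy the whiskering condition of \Cref{def:strictslice}~(iii) trivially; hence every such hom-category is $\mathbbm 1$ and $(L,\lambda)$ is $2$-terminal.

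Finally, for the failure of the limit property I would exhibit the modification that $\lambda_*\circ\Delta$ cannot hit. The triple $(\gamma_0,\gamma_1,\,b*\gamma_0)$ assembles into a modification $\varphi\colon\lambda\Delta f\Rightarrow\lambda\Delta g$ in $[I,\cA](\Delta X,F)$: the modification axiom for the two arrows of $I$ demands that the $A$-component equal both $b*\gamma_0$ and $c*\gamma_1$, which is exactly the imposed relation $b*\gamma_0=c*\gamma_1$. As $\lambda\Delta f\neq\lambda\Delta g$ already on $B$-components, $\varphi$ is a genuine non-identity morphism of the target. However every morphism in the image of $\lambda_*\circ\Delta$ has the form $\lambda*\Delta\alpha$ for some $\alpha\colon f\Rightarrow g$ in \cA{}, and no such $\alpha$ exists; so $\varphi$ lies outside the image, $\lambda_*\circ\Delta$ is not surjective on morphisms, and $(L,\lambda)$ is not a $2$-limit of $F$.

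I expect the one genuinely delicate step to be the explicit description of \cA{} as a $2$-computad presented by the two relations --- concretely, the claims that $b\lambda_0 f\neq b\lambda_0 g$ and that there is no $2$-cell $f\Rightarrow g$. Both are intuitively forced, since the presentation introduces neither a generator nor a relation capable of producing them, but a fully rigorous argument requires a normal-form or freeness statement for presented $2$-categories. Once that bookkeeping is secured, the two halves of the claim follow from the finite case analysis above.
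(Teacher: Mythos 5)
Your proof is correct and follows essentially the same route as the paper's: enumerate the three $2$-cones, check that each admits a unique morphism (and only identity $2$-morphisms) to $(L,\lambda)$, and then exhibit $(\gamma_0,\gamma_1)$ as a modification $\lambda*f\threecellarr\lambda*g$ that cannot be hit because $\cA(X,L)$ is discrete. The only difference is that you make explicit the normal-form bookkeeping for the presented $2$-category (e.g.\ $b\lambda_0 f\neq b\lambda_0 g$ and the absence of any $2$-cell $f\Rightarrow g$), which the paper leaves implicit; this is a reasonable point to flag but does not change the argument.
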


\begin{proof}
  The objects of the strict-slice $\sslice \Delta F$ are given by the
  $2$-cones over $F$:
\[ (L,\lambda),\quad (X,\lambda* f), \ \  \text{and} \ \ (X,\lambda* g).
\]
Each of these objects admits precisely one morphism to $(L,\lambda)$
in $\sslice \Delta F$ given by
\begin{align*}
    \id_L\colon &(L,\lambda)\to (L,\lambda) \\
    f\colon &(X,\lambda* f)\to (L,\lambda) \\
    g\colon &(X,\lambda* g)\to (L,\lambda).
\end{align*}
There are no non-trivial $2$-morphisms to $(L,\lambda)$ in
$\sslice \Delta F$, since there are no non-trivial $2$-morphisms between
$X$ and $L$ in $\cA$. This proves that $(L,\lambda)$ is
$2$-terminal in $\sslice \Delta F$.

However, the $2$-morphisms $\gamma_0$ and $\gamma_1$ give the data of
a modification $\minl{\lambda*f}\gamma{\lambda*g}$, i.e.~a morphism in
$[I,\cA](\Delta X, F)$. But there is no $2$-morphism between $f$ and $g$
in $\cA$ that maps to $\gamma$ via
$\ainl{\cA(X,L)}{\lambda_*\circ\Delta}{[I,\cA](\Delta X, F)}$. Hence
$(L,\lambda)$ is not the $2$-limit of $F$.
\end{proof}

A $2$-terminal object in the strict-slice is, however, a $2$-limit when the $2$-category $\cA$ admits tensors by the category $\mathbbm{2}=\{0\to 1\}$. Indeed, it follows from this condition that the $1$-dimensional aspect of the universal property of a $2$-limit implies the $2$-dimensional one (see \cite[\S 4]{Kelly1989}). A $2$-category \cA is said to \emph{admit tensors by a category \cC} when, for each object $X\in \cA$, there exists an object $X\otimes \cC\in \cA$ together with isomorphisms of categories
\[
\cA(X\otimes \cC, Y) \iso \Cat(\cC,\cA(X,Y))\ ,
\]
$2$-natural in $X,Y\in \cA$. In particular, this implies that there is a bijection between morphisms $X\otimes \cC\to Y$ in $\cA$ and functors $\cC\to \cA(X,Y)$.

\begin{proposition} \label{prop:tensor}
  Suppose $\cA$ is a $2$-category that admits tensors by $\mathbbm{2}$, and let $F\colon I\to \cA$ be a $2$-functor. Then an object $(L,\lambda\colon \Delta L\Rightarrow F)$ is a $2$-terminal object in the strict-slice $\sslice \Delta F$ if and only if it is a $2$-limit of $F$.
\end{proposition}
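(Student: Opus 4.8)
The plan is to prove the nontrivial direction and harvest the other for free. One implication is already established: by \Cref{pos:2l2ts}, every $2$-limit is $2$-terminal in the strict-slice, and this requires no hypothesis on $\cA$. So I would only need the converse: assuming $\cA$ admits tensors by $\mathbbm{2}$ and that $(L,\lambda)$ is $2$-terminal in $\sslice \Delta F$, I would show $(L,\lambda)$ is a $2$-limit. The first step is to extract from $2$-terminality exactly the $1$-dimensional universal property. By \Cref{rem:univ2terminal}~(1), for \emph{every} object $Z\in \cA$ and every $2$-cone $\mu\colon \Delta Z\Rightarrow F$ there is a unique $f_\mu\colon Z\to L$ with $\lambda\Delta f_\mu=\mu$; equivalently, the comparison functor $\ainl{\cA(Z,L)}{\lambda_*\circ\Delta}{[I,\cA](\Delta Z,F)}$ is bijective on objects, for all $Z$. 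The goal is then to promote this, at each fixed $X$, to a full isomorphism of categories, i.e.\ to show that $\lambda_*\circ\Delta$ is also bijective on \emph{morphisms}.

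The key idea is to use tensoring by $\mathbbm{2}$ to convert statements about morphisms into statements about objects over a tensored summit. Fixing $X\in \cA$, the $2$-natural isomorphisms $\cA(X\otimes \mathbbm{2},Y)\cong \Cat(\mathbbm{2},\cA(X,Y))$ let me identify a $2$-morphism $\alpha\colon a\Rightarrow b$ in $\cA(X,L)$ with a morphism $X\otimes \mathbbm{2}\to L$, and—more substantially—a modification $\minl{\mu}{\varphi}{\nu}$ between $2$-cones with summit $X$ with a single $2$-cone $\Delta(X\otimes\mathbbm{2})\Rightarrow F$. Concretely I would argue that since $\Cat(\mathbbm{2},-)$ is a right adjoint and the cone category $[I,\cA](\Delta Z,F)$ is computed as a limit in $\Cat$ of the categories $\cA(Z,Fi)$ (cut out by the $2$-naturality and modification axioms), these isomorphisms assemble, naturally in the component hom-categories, into an isomorphism of categories
\[
  [I,\cA](\Delta(X\otimes\mathbbm{2}),F)\;\cong\;\Cat\!\left(\mathbbm{2},\,[I,\cA](\Delta X,F)\right),
\]
under which objects on the left correspond to arrows on the right, that is, to modifications between $2$-cones with summit $X$. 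Because $\lambda_*\circ\Delta$ is natural in the summit $Z$, this isomorphism intertwines $\lambda_*\circ\Delta$ at $Z=X\otimes\mathbbm{2}$ with the functor $\Cat(\mathbbm{2},-)$ applied to $\lambda_*\circ\Delta$ at $Z=X$; in particular, restricting along the two endpoint inclusions $\mathbbm{1}\to\mathbbm{2}$ shows that the $2$-morphism produced from $\varphi$ has source $f_\mu$ and target $f_\nu$.

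With this dictionary in place the conclusion is immediate. Applying bijectivity-on-objects of $\lambda_*\circ\Delta$ at the summit $X\otimes\mathbbm{2}$ says precisely that $\Cat(\mathbbm{2},-)$ of $\lambda_*\circ\Delta$ (at $X$) is bijective on objects, i.e.\ that $\lambda_*\circ\Delta\colon\cA(X,L)\to [I,\cA](\Delta X,F)$ is bijective on arrows. Combined with bijectivity on objects at the summit $X$, and using that a functor which is injective on objects and bijective on the totality of arrows is automatically bijective on each hom-set, I conclude that $\lambda_*\circ\Delta$ is an isomorphism of categories; as $X$ was arbitrary, $(L,\lambda)$ is a $2$-limit of $F$. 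The main obstacle is the middle step—establishing the displayed isomorphism together with its compatibility with $\lambda_*\circ\Delta$—which amounts to verifying that $2$-cones with summit $X\otimes\mathbbm{2}$ are exactly modifications between pairs of $2$-cones with summit $X$; this is where one must carefully match the two $2$-naturality axioms of \Cref{def:2nat} for a cone out of $X\otimes\mathbbm{2}$ against the cone axioms for the source and target together with the modification axiom of \Cref{def:strictmod}. Everything else is formal bookkeeping with the tensor adjunction.
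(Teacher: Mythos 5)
Your proposal is correct and follows essentially the same route as the paper: reduce to the converse direction, and obtain the $2$-dimensional universal property at $X$ by applying the $1$-dimensional one at the tensored summit $X\otimes\mathbbm{2}$, using that $[I,\cA]$ inherits pointwise tensors so that $2$-cones $\Delta(X\otimes\mathbbm{2})=\Delta X\otimes\mathbbm{2}\Rightarrow F$ correspond to modifications between $2$-cones with summit $X$. The paper justifies that key identification by citing the standard pointwise-tensor fact for functor $2$-categories rather than recomputing the cone category as a limit in $\Cat$, but this is the same underlying argument.
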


\begin{proof}
  We already saw one of the implications in \Cref{pos:2l2ts}. Let us prove the other.

  Suppose that $(L,\lambda)$ is a $2$-terminal object in the strict-slice $\sslice \Delta F$. We show that $(L,\lambda)$ satisfies the two conditions (1) and (2) of \Cref{rem:univprop}, expressing the two aspects of the universal property of a $2$-limit. It is clear that (1) holds since it is the same condition as the one expressing the $1$-dimensional aspect of the universal property of a $2$-terminal object, as in \Cref{rem:univ2terminal} (1). It remains to show (2).

  Before proceeding, let us examine the effect of admitting tensors by $\mathbbm 2$. Let $X\in \cA$. The universal property of tensoring by $\mathbbm{2}$ in $\cA$ gives a canonical bijection between morphisms $X\otimes \mathbbm{2}\to L$ in $\cA$ and functors $\mathbbm{2}\to \cA(X,L)$, but these functors coincide with $2$-morphisms from $X$ to $L$. The $2$-category $[I,\cA]$ is also tensored by $\mathbbm{2}$, since $\cA$ is, and the tensor is given object-wise \cite[\S 3.3]{KellyBook}. In particular, $\Delta(X\otimes \mathbbm{2})=\Delta X\otimes \mathbbm{2}$ as constant functors, by the object-wise definition of tensoring by $\mathbbm 2$. As $[I,\cA]$ admits tensors by $\mathbbm 2$, we have a canonical bijection between $2$-cones $\Delta X\otimes \mathbbm{2}\Rightarrow F$ and functors $\mathbbm{2}\to [I,\cA](\Delta X,F)$, which in turn coincide with modifications between $\Delta X$ and $F$.

  By \Cref{rem:univ2terminal} (1), for every $2$-cone $\varphi\colon\Delta X\otimes \mathbbm{2}\Rightarrow F$, there exists a unique morphism $\alpha\colon X\otimes \mathbbm{2}\to L$ in $\cA$ such that $\lambda\Delta \alpha=\varphi$. Using the above, we can reformulate this statement as follows: for every modification $\varphi$ between $2$-cones $\Delta X\Rightarrow F$, there is a unique $2$-morphism $\alpha$ between morphisms $X\to L$ such that $\lambda\Delta \alpha=\varphi$. But this is exactly (2) of \Cref{rem:univprop}.
\end{proof}

The category $\Cat$ of categories and functors is cartesian closed. Therefore, it is enriched over itself and so is, in particular, tensored over $\Cat$. In other words, the $2$-category $\Cat$ of categories, functors, and natural transformations admits tensors by all categories, and these tensors are given by cartesian products. In particular, \Cref{prop:tensor} yields the following result.

\begin{corollary}\label{cor:worksforcat}
Let $F\colon I\to \Cat$ be a $2$-functor into $\Cat$. A pair $(\cL,\lambda\colon \Delta \cL\Rightarrow F)$ is a $2$-terminal in the strict-slice $\sslice \Delta F$ of $2$-cones over~$F$ if and only if it is a $2$-limit of $F$.
\end{corollary}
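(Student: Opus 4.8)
The plan is to recognise this statement as an immediate instance of \Cref{prop:tensor}, so that the entire content reduces to verifying its hypothesis: that $\Cat$ admits tensors by $\mathbbm 2$. First I would recall that $\Cat$ is cartesian closed, with exponentiation by a category $\cC$ given by the functor category $\Cat(\cC,Y)$. Concretely, for any categories $\cC$, $X$, $Y$ there is an isomorphism of categories $\Cat(\cC\times X,Y)\iso\Cat(\cC,\Cat(X,Y))$, natural in all three arguments, coming from the adjunction $-\times\cC\dashv\Cat(\cC,-)$.

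Next I would specialise this to the tensor axiom. Using the symmetry of the cartesian product to rewrite $X\times\cC$ as $\cC\times X$, the cartesian-closed isomorphism reads $\Cat(X\times\cC,Y)\iso\Cat(\cC,\Cat(X,Y))$, which is precisely the defining isomorphism of a tensor with $X\otimes\cC:=X\times\cC$. Setting $\cC=\mathbbm 2$ then shows that $\Cat$ admits tensors by $\mathbbm 2$, realised by the product $X\times\mathbbm 2$. The $2$-naturality in $X$ and $Y$ demanded by the definition of tensors is inherited from the naturality of the cartesian-closed adjunction, which is an adjunction of $2$-categories.

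With the hypothesis of \Cref{prop:tensor} verified for $\cA=\Cat$, the proposition applies directly and yields the stated biconditional: $(\cL,\lambda)$ is a $2$-terminal object in the strict-slice $\sslice\Delta F$ if and only if it is a $2$-limit of $F$. Since the substantive work is carried out entirely by \Cref{prop:tensor}, there is no genuine obstacle here; the only point warranting a moment's care is confirming that the cartesian-closed isomorphism is truly $2$-natural, and not merely natural at the level of underlying hom-\emph{sets}, which is guaranteed by viewing $-\times\cC\dashv\Cat(\cC,-)$ as a $\Cat$-enriched adjunction.
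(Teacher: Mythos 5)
Your proposal is correct and matches the paper's own argument: the paper likewise derives the corollary directly from \Cref{prop:tensor} by observing that $\Cat$ is cartesian closed, hence tensored over itself with tensors given by cartesian products, so in particular it admits tensors by $\mathbbm 2$. Your extra care about the $2$-naturality of the cartesian-closed adjunction is a reasonable elaboration of the same point.
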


\section{\texorpdfstring{$2$}{2}-terminal objects in lax- and pseudo-slices are not related to \texorpdfstring{$2$}{2}-limits} \label{sec:laxpsdslice}

We have seen in \Cref{sec:2limits} that $2$-terminal objects in the strict-slice of $2$-cones over a $2$-functor do not, in general, succeed in capturing both aspects of the universal properties of $2$-limits. In particular, the problem is that the strict-slice of $2$-cones does not see the modifications between two $2$-cones with same summit. In attempt to rectify this, we might consider richer slice $2$-categories containing more data in their morphisms: the \emph{lax-slice} and the \emph{pseudo-slice} of $2$-cones. However, $2$-terminal objects in these new slice $2$-categories seem to be unrelated to $2$-limits. As we present below, there are $2$-limits that are not $2$-terminal objects in the lax-slice (resp.~pseudo-slice), and conversely so.

We start by introducing lax- and pseudo-natural transformations between $2$-functors, and modifications between them.

\begin{definition} \label{def:nattransf}
Let $I$ and $\cA$ be $2$-categories, and let $F,G\colon I\to \cA$ be $2$-functors between them. A \textbf{lax-natural transformation} $\mu\colon F\Rightarrow G$ comprises the data of
\begin{rome}
\item a morphism $\ainl {Fi}{\mu_i}{Gi}$, for each $i\in I$,
\item a $2$-morphism $\ninl {(Gf)\mu_{i}}{\mu_f}{\mu_{j}(Ff)}$, for each morphism $f\colon i\to j$ in $I$,
\begin{diagram*}
\node(1)[]{$Fi$};
\node(2)[right of=1]{$Gi$};
\node(3)[below of=1]{$Fj$};
\node(4)[right of=3]{$Gj$};
\draw[a](1) to node[la]{$\mu_i$}(2);
\draw[a](1) to node[swap,la]{$Ff$}(3);
\draw[a](2) to node[la]{$Gf$}(4);
\draw[a](3) to node[swap,la]{$\mu_j$}(4);
\cell[la]{2}{3}{$\mu_f$};
\end{diagram*}
\end{rome}
which satisfy the following conditions:
\begin{enumerate}
    \item for all $i\in I$, $\mu_{\id_i}=\id_{\mu_i}$,
    \item for all composable morphisms $f,g$ in $I$, $\mu_{gf}=(\mu_g*Ff)(Gg*\mu_f)$,
    \item for all $2$-morphisms $\alpha\colon f\Rightarrow g$ in $I$, we have that $\mu_g(G\alpha*\mu_i)=(\mu_j*F\alpha)\mu_f$.
\begin{diagram*}
  \node(1)[]{$Fi$};
  \node(2)[right of=1]{$Gi$};
  \node(3)[below of=1]{$Fj$};
  \node(4)[right of=3]{$Gj$};
  \draw[a](1) to node[la]{$\mu_i$}(2);
  \draw[a](1) to node[swap,la]{$Fg$}(3);
  \draw[a, bend left=60](2) to node(f)[la]{$Gf$}(4);
  \draw[a](2) to node(g)[swap,la,xshift=.1cm]{$Gg$}(4);
  \draw[a](3) to node[swap,la]{$\mu_j$}(4);
  \cell[swap,la]{2}{3}{$\mu_g$};
  \cell[la,swap][n][0.55][.2cm]{f}{g}{$G\alpha$};

  \node(1)[right of=2,xshift=2cm]{$Fi$};
  \node(2)[right of=1]{$Gi$};
  \node(3)[below of=1]{$Fj$};
  \node(4)[right of=3]{$Gj$};
  \draw[a](1) to node[la]{$\mu_i$}(2);
  \draw[a](1) to node(f')[la,xshift=-.1cm]{$Ff$}(3);
  \draw[a, bend right=60](1) to node(g')[swap,la]{$Fg$}(3);
  \draw[a](2) to node[la]{$Gf$}(4);
  \draw[a](3) to node[swap,la]{$\mu_j$}(4);
  \cell[la]{2}{3}{$\mu_f$};
  \cell[la,swap][n][0.5][.2cm]{f'}{g'}{$F\alpha$};

  \node at ($(f)!0.5!(g')$) {$=$};
\end{diagram*}
\end{enumerate}

A \textbf{pseudo-natural transformation} is a lax-natural transformation $\ninl F\mu G$ whose every $2$-morphism component $\mu_f$ is invertible.
\end{definition}

\begin{definition} \label{def:modif}
Let $\ainl I{F,G}\cA$ be $2$-functors, and let $\ninl F{\mu,\nu}G$ be lax-natural transformations between them. A \textbf{modification} $\minl{\mu}{\varphi}{\nu}$ comprises the data of a $2$-morphism $\ninl{\mu_i}{\varphi_i}{\nu_{i}}$ for each $i\in I$, which satisfy $\nu_f(Gf*\varphi_i)=(\varphi_j*Ff)\mu_f$, for all morphisms $\ainl ifj$ in $I$.
\begin{diagram*}
  \node(1)[]{$Fi$};
  \node(2)[right of=1]{$Gi$};
  \node(3)[below of=1]{$Fj$};
  \node(4)[right of=3]{$Gj$};
  \draw[a](1) to node(g)[la,swap]{$\nu_i$}(2);
  \draw[a](1) to node(a)[la,swap]{$Ff$}(3);
  \draw[a, bend left=60,looseness=1.5](1) to node(f)[la]{$\mu_i$}(2);
  \draw[a](2) to node[la]{$Gf$}(4);
  \draw[a](3) to node[swap,la]{$\nu_j$}(4);
  \cell[la]{2}{3}{$\nu_f$};

  \coordinate (n) at ($(f)-(0.1cm,0)$);
  \coordinate (m) at ($(g)-(0.1cm,0)$);
  \cell[la,xshift=1pt][n]{n}{m}{$\varphi_{i}$};

  \node(1)[right of=2, xshift=1cm]{$Fi$};
  \node at ($(1)!0.5!(4)$) {$=$};
  \node(2)[right of=1]{$Gi$};
  \node(3)[below of=1]{$Fj$};
  \node(4)[right of=3]{$Gj$};
  \draw[a](1) to node[la]{$\mu_i$}(2);
  \draw[a](1) to node[swap,la]{$Ff$}(3);
  \draw[a, bend right=60,looseness=1.5](3) to node(g)[la,swap]{$\nu_j$}(4);
  \draw[a](2) to node[la]{$Gf$}(4);
  \draw[a](3) to node(f)[la]{$\mu_j$}(4);
  \cell[swap,la]{2}{3}{$\mu_f$};
  \coordinate (n) at ($(f)-(0.1cm,0)$);
  \coordinate (m) at ($(g)-(0.1cm,0)$);
  \cell[la,xshift=1pt][n]{n}{m}{$\varphi_{j}$};
\end{diagram*}

Similarly, we have a notion of modification between pseudo-natural transformations.
\end{definition}

\begin{remark}
  Note that a $2$-natural transformation $\ninl F\mu G$ as defined in \Cref{def:2nat} is precisely a lax-natural transformation whose every $2$-morphism component $\mu_f$ is an identity. Moreover, modifications in the sense just defined between two lax-natural transformations which happen to be $2$-natural coincide with the modifications of \Cref{def:strictmod}.
\end{remark}

As in the case of $2$-natural transformations, lax- and pseudo-natural transformations and modifications assemble into $2$-categories whose objects are $2$-functors.

\begin{notation}
  Let $I$ and $\cA$ be $2$-categories. We can define two $2$-categories whose objects are the $2$-functors $I\to \cA$:
\begin{rome}
\item the $2$-category $\Lax[I,\cA]$, whose $1$- and $2$-morphisms are
  lax-natural transformations and modifications,
\item the $2$-category $\Psd[I,\cA]$, whose $1$- and $2$-morphisms are
  pseudo-natural transformations and modifications.
\end{rome}
\end{notation}

The lax-slice and pseudo-slice of $2$-cones over a $2$-functor $\ainl IF\cA$ can be defined as pullbacks, as in \Cref{def:strictslice}, where we replace the upper-left corner with the $2$-categories $\Lax[\mathbbm{2},[I,\cA]]$ and $\Psd[\mathbbm{2},[I,\cA]]$, respectively. These constructions do not change the objects of the slice, but add more morphisms between them.

\begin{definition} \label{def:slice} Let $F\colon I\to \cA$ be a
  $2$-functor. The \textbf{lax-slice} $\lslice \Delta F$ of $2$-cones over
  $F$ is defined to be the following pullback in the ($1$-)category of
  $2$-categories and $2$-functors.
  \begin{diagram*}
    \node(1)[]{$\lslice \Delta F$};
    \node(2)[below of= 1]{$\cA\times \mathbbm{1}$};
    \node(3)[right of= 2, xshift=1.5cm]{$[I,\cA]\times [I,\cA]$};
    \node(4)[above of= 3]{$\Lax[\mathbbm{2},[I,\cA]]$};
    \draw[a](1)to (2);
    \draw[a](2)to node[swap,la]{$(\Delta,F)$}(3);
    \draw[a](1)to (4);
    \draw[a](4) to node[la]{$(s,t)$}(3);
    \pullback{1};
  \end{diagram*}
  This $2$-category $\lslice \Delta F$ is given by the following data:
  \begin{rome}
  \item an object in $\lslice \Delta F$ is a pair $(X,\mu)$ of an
    object $X\in \cA$ together with a $2$-natural transformation
    $\mu\colon \Delta X\Rightarrow F$,
  \item a morphism $(f,\varphi)\colon (X,\mu)\to (Y,\nu)$ consists of
    a morphism $f\colon X\to Y$ in $\cA$ together with a modification $\minl{\nu\Delta f}{\varphi}{\mu}$,
    \begin{diagram*}[node distance=1.7cm]
      \node(1)[]{$\Delta X$};
      \node(2)[right of= 1]{$\Delta Y$};
      \node(3)[below of= 2]{$F$};
      \draw[n](1)to node[la]{$\Delta f$}(2);
      \draw[n](2)to node[la]{$\nu$}(3);
      \draw[n](1)to node(a)[swap,la]{$\mu$}(3);
      \cell[la][t]{2}{a}{$\varphi$};
    \end{diagram*}
  \item a $2$-morphism $\alpha\colon (f,\varphi)\Rightarrow (g,\psi)$ between morphisms $(f,\varphi),(g,\psi)\colon (X,\mu)\to (Y,\nu)$ is a $2$-morphism $\alpha\colon f\Rightarrow g$ in $\cA$ such that $\psi (\nu*\Delta \alpha)=\varphi$.
    \begin{diagram*}[node distance=1.7cm]
      \node(4)[]{$\Delta X$};
      \node(5)[right of= 4]{$\Delta Y$};
      \node(6)[below of= 5]{$F$};
      \node(1)[right of=5]{$\Delta X$};
      \node(2)[right of= 1]{$\Delta Y$};
      \node(3)[below of= 2]{$F$};
      \draw[n](1)to node[la]{$\Delta f$}(2);
      \draw[n](2)to node[la]{$\nu$}(3);
      \draw[n](1)to node(a)[swap,la]{$\mu$}(3);
      \cell[la][t]{2}{a}{$\varphi$};
      \draw[n,bend left=65,looseness=1.5](4)to node(t)[la]{$\Delta f$}(5);
      \draw[n,bend right=0](4)to node(b)[swap,la]{$\Delta g$}(5);
      \draw[n](5)to node[la]{$\nu$}(6);
      \draw[n](4)to node(c)[swap,la]{$\mu$}(6);

      \coordinate (n) at ($(t)-(0.2cm,0)$);
      \coordinate (m) at ($(b)-(0.2cm,0)$);
      \cell[la,xshift=1pt][t]{n}{m}{$\Delta\alpha$};

      \coordinate (p) at ($(5)-(0,0.1cm)$);
      \coordinate (q) at ($(c)-(0,0.1cm)$);
      \cell[la,xshift=-1pt][t][0.45]{p}{q}{$\psi$};

      \node at ($(6)!0.5!(1)$) {$=$};
    \end{diagram*}
  \end{rome}

  Similarly, we can define the \textbf{pseudo-slice} $\pslice \Delta F$ of $2$-cones over~$F$ by replacing the upper-left corner $\Lax[\mathbbm{2},[I,\cA]]$ in the
  pullback above with
  $\Psd[\mathbbm{2},[I,\cA]]$. The pseudo-slice corresponds to
  the sub-$2$-category of the lax-slice $\lslice \Delta F$ containing all objects and only the morphisms $(f,\varphi)$ for which the
  modification $\varphi$ is invertible, and which is locally-full on
  $2$-morphisms.
\end{definition}

\begin{remark}
  Note that the strict-slice $\sslice \Delta F$ as defined in \Cref{def:strictslice} corresponds to the locally-full sub-$2$-category of the lax- or pseudo-slice containing all objects and only the morphisms $(f,\varphi)$ for which the
  modification $\varphi$ is an identity.
\end{remark}

We now give two counter-examples which show that
\begin{itemize}
    \item not every $2$-limit is $2$-terminal in the lax-slice of $2$-cones (\cref{ce:2lnot2tl}),
    \item not every $2$-terminal object in the lax-slice of $2$-cones is a $2$-limit (\cref{ce:tlsnot2l}).
\end{itemize}
These statements imply that, unlike in the case of strict-slices, $2$-terminal objects in the lax-slice are not at all related to $2$-limits. We derive counter-examples to show that the same is true for pseudo-slices, namely that $2$-terminal objects in the pseudo-slice of $2$-cones are not related to $2$-limits.

We first give an example of a $2$-limit that is not $2$-terminal in the lax-slice of $2$-cones. To illustrate this failure we seek a case where the lax-slice sees too many morphisms between the $2$-cones. In the counter-example below, we show that a $2$-morphism that is part of the $2$-dimensional aspect of the universal property of a $2$-limit might create undesirable morphisms in the lax-slice of $2$-cones.

\begin{ce}\label{ce:2lnot2tl}
    Let $I$ be the pullback shape diagram $\{ \bullet \longrightarrow\bullet \longleftarrow \bullet\}$. Let $\cA$ be the $2$-category generated by the data
  \begin{diagram*}
    \node(1)[]{$X$};
    \node(2)[below right of= 1]{$L$};
    \node(3)[right of= 2]{$B$};
    \node(4)[below of= 2]{$C$};
    \node(5)[below of= 3]{$A$};
    \draw[a,bend left=40](1)to node(f)[la,shrink]{$f$}(2);
    \draw[a,bend right=40](1)to node(g)[la,swap]{$g$}(2);
    \draw[a](2)to node[la]{$\lambda_{0}$}(3);
    \draw[a](2)to node[swap,la]{$\lambda_{1}$}(4);
    \draw[a](4)to node[swap,la]{$c$}(5);
    \draw[a](3)to node[la]{$b$}(5);
    \cell[swap,la]{f}{g}{$\alpha$};
  \end{diagram*}
  subject to the relation
  $b\lambda_0=c\lambda_1$. Take $F\colon I\to \cA$ to be the diagram
  \begin{center}
  \begin{tikzpicture}[node distance=1.3cm, baseline=(0.base)]
      \node(1)[]{$B$};
      \node(0)[below of= 1]{$A$};
      \node(2)[left of= 0]{$C$};
      \draw[a] (2) to node[swap,la]{$c$} (0);
      \draw[a] (1) to node[la]{$b$} (0);
    \end{tikzpicture} .
    \end{center}
\end{ce}

\begin{claim}
  The object $(L,\ninl{\Delta L}{\lambda} F)$ is the
  $2$-limit of $F$, but it is not $2$-terminal in the lax-slice
  $\lslice \Delta F$ of $2$-cones over $F$.
\end{claim}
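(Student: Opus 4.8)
The plan is to first pin down all the cells of $\cA$, then verify the $2$-limit universal property one hom-category at a time, and finally exhibit an unwanted lax-slice morphism manufactured from $\alpha$. Since $\cA$ is freely generated subject only to $b\lambda_0 = c\lambda_1$, I would begin by recording that its non-identity $2$-cells are exactly $\alpha$ together with its left-whiskerings $\lambda_0 * \alpha$, $\lambda_1 * \alpha$, and $(b\lambda_0)*\alpha$; here $(b\lambda_0)*\alpha = b*(\lambda_0*\alpha) = c*(\lambda_1*\alpha) = (c\lambda_1)*\alpha$, so no relation on $2$-cells needs to be imposed. In particular the only $1$-cells into $L$ are $f,g\colon X\to L$ and $\id_L$, and the only non-identity $2$-cell between parallel such is $\alpha$.

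For the $2$-limit property I would check that the comparison functor $\lambda_* \circ \Delta \colon \cA(Y, L) \to [I, \cA](\Delta Y, F)$ is an isomorphism of categories for each $Y\in\cA$. Recall that a $2$-cone $\Delta Y \Rightarrow F$ amounts to a pair $(\mu_B\colon Y\to B,\ \mu_C\colon Y\to C)$ with $b\mu_B = c\mu_C$, and a modification to a compatible pair $(\varphi_B,\varphi_C)$ with $b*\varphi_B = c*\varphi_C$. For $Y\in\{B,C,A\}$ both categories are empty, since there are no $1$-cells $Y\to L$ and no $2$-cones (no such object admits $1$-cells to both $B$ and $C$). For $Y=L$ both sides are $\mathbbm 1$. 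The essential case is $Y=X$: there $\cA(X,L)\cong\mathbbm 2$ with objects $f,g$ and sole non-identity arrow $\alpha$, while the only $2$-cones are $\lambda*f = (\lambda_0 f,\lambda_1 f)$ and $\lambda*g = (\lambda_0 g,\lambda_1 g)$ -- the mixed pairs are excluded because $b\lambda_0 f \neq b\lambda_0 g$ -- and the sole non-identity modification is $\lambda*\alpha = (\lambda_0*\alpha,\lambda_1*\alpha)$, so $[I,\cA](\Delta X,F)\cong\mathbbm 2$ as well. The comparison functor sends $f\mapsto\lambda*f$, $g\mapsto\lambda*g$, $\alpha\mapsto\lambda*\alpha$, hence is an isomorphism, and $(L,\lambda)$ is the $2$-limit of $F$.

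For the failure of $2$-terminality I would test the object $(X,\lambda*g)$. By \Cref{def:slice}, a morphism $(X,\lambda*g)\to(L,\lambda)$ is a pair $(h,\varphi)$ of a $1$-cell $h\colon X\to L$ and a modification $\minl{\lambda\Delta h}{\varphi}{\lambda*g}$. The expected choice $h=g$, $\varphi=\id$ gives one such morphism. But with $h=f$ the whiskered modification $\minl{\lambda*f}{\lambda*\alpha}{\lambda*g}$ is a legitimate filler, so $(f,\lambda*\alpha)$ is a second morphism $(X,\lambda*g)\to(L,\lambda)$. As $f\neq g$, these are distinct objects of the hom-category $\lslice \Delta F((X,\lambda*g),(L,\lambda))$, which therefore has at least two objects and cannot be isomorphic to $\mathbbm 1$; hence $(L,\lambda)$ is not $2$-terminal in the lax-slice.

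I expect the only real care to lie in the first half, namely confirming that the enumeration of cells of $\cA$ is complete so that no stray $2$-cone or modification can appear and spoil the hom-category computations. By contrast, the failure of $2$-terminality is immediate once one observes that the $2$-cell $\alpha$, while legitimately supplying the filler $\lambda*\alpha$ demanded of the $2$-limit, simultaneously lets the "wrong" underlying $1$-cell $f$ support a lax-slice morphism out of $(X,\lambda*g)$.
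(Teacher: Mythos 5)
Your proposal is correct and follows essentially the same route as the paper: enumerate the $2$-cones (only $(L,\lambda)$, $(X,\lambda*f)$, $(X,\lambda*g)$ exist), verify the comparison functor is an isomorphism on the relevant hom-categories, and then exhibit the two distinct lax-slice morphisms $(g,\id)$ and $(f,\lambda*\alpha)\colon(X,\lambda*g)\to(L,\lambda)$. You merely make explicit some checks (the empty hom-categories for summits $A$, $B$, $C$ and the completeness of the list of $2$-cells) that the paper leaves implicit.
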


\begin{proof}
  Let us begin by enumerating all the $2$-cones over $F$:
  \[ (L,\lambda),\quad (X,\lambda* f), \ \  \text{and} \ \ (X,\lambda* g).
\]

  We can see that $(L,\lambda)$ is a $2$-limit of $F$, since we have
  \[ \cA(X,L)=\{f\overset{\alpha}{\Rightarrow}g\} \ \ \text{and} \ \
    [I,\cA](\Delta X,F)=\{{\lambda*f}
    \overset{\lambda*\alpha}{\threecellarr} {\lambda*g}\}\] and  $\cA(L,L)=\{\id_L\}$ and
  $[I,\cA](\Delta L,F)=\{\lambda\}$.

  However, there are two distinct morphisms from $(X,\lambda *g)$ to $(L,\lambda)$ in the lax-slice $\lslice \Delta F$, which are given by
  \begin{align*}
    (g,(\id_{\lambda_0g},\id_{\lambda_1g}))\colon &(X,\lambda*g)\to (L,\lambda) \\
    (f,(\lambda_0*\alpha,\lambda_1*\alpha)) \colon &(X,\lambda *g)\to (L,\lambda).
  \end{align*}
  Therefore $(L,\lambda)$ is not $2$-terminal in $\lslice \Delta F$.
\end{proof}

\begin{reduction}\label{rem:2lnottp}
  By requiring $\alpha$ to be invertible in \Cref{ce:2lnot2tl}, we can
  similarly show that $(L,\lambda)$ is the $2$-limit of $F$, but is
  not $2$-terminal in the pseudo-slice $\pslice \Delta F$ of $2$-cones over $F$.
\end{reduction}

Next we give an example of a $2$-terminal object in the lax-slice of
$2$-cones that is not a $2$-limit. This counter-example is designed to capture a particular arrangement of two $2$-cones over a $2$-functor together with a single non-trivial modification between them. This modification gives rise to a morphism in the lax-slice between these two $2$-cones, exhibiting the target $2$-cone as $2$-terminal in the lax-slice. However, the source $2$-cone is not in the image of the post-composition functor by the target $2$-cone, which shows that the latter is not a $2$-limit.

\begin{ce}\label{ce:tlsnot2l}
  Let $I$ be the pullback shape diagram $\{ \bullet \longrightarrow\bullet \longleftarrow \bullet\}$. Let $\cA$ be the $2$-category generated by the data
   \begin{diagram*}
     \node(1)[]{$L$};
     \node(2)[right of=1]{$B$};
     \node(3)[below of=1]{$C$};
     \node(4)[below of=2]{$A$};
     \node(5)[above left of= 1]{$X$};

     \draw[a](3)to node[swap,la]{$c$}(4);
     \draw[a](2)to node[]{$b$}(4);
     \draw[a](1)to node[la,over]{$\lambda_{0}$}(2);
     \draw[a](1)to node[la,over]{$\lambda_{1}$}(3);
     \draw[a](5)to node[la,over,shrink]{$f$}(1);
     \draw[a,bend right=30](5)to node(R)[la,swap]{$\alpha_{1}$}(3);
     \draw[a,bend left=30](5)to node(S)[la,shrink]{$\alpha_{0}$}(2);

     \cell[la,near start,swap][n][0.45]{1}{R}{$\gamma_{1}$};
     \cell[la,near start]{1}{S}{$\gamma_{0}$};
  \end{diagram*}
  subject to the relations $b\lambda_0=c\lambda_1$ and
  $b\alpha_0=c\alpha_1$. Take $F\colon I\to \cA$ to be the diagram
  \begin{center}
  \begin{tikzpicture}[node distance=1.3cm,baseline=(0.base)]
      \node(1)[]{$B$};
      \node(0)[below of= 1]{$A$};
      \node(2)[left of= 0]{$C$};
      \draw[a] (2) to node[swap,la]{$c$} (0);
      \draw[a] (1) to node[la]{$b$} (0);
    \end{tikzpicture} .
    \end{center}
\end{ce}

\begin{claim}
  The object $(L,\ninl{\Delta L}{\lambda} F)$ is $2$-terminal
  in the lax-slice $\lslice \Delta F$ of $2$-cones over~$F$, but the functor
  \[ \ainl{\cA(X,L)}{\lambda_*\circ\Delta}{[I,\cA](\Delta X,F)} \]
  given by post-composition with $\lambda$ is not surjective on
  objects, i.e.~$(L,\lambda)$ is not a $2$-limit of~$F$.
\end{claim}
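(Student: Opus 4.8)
The plan is to establish $2$-terminality of $(L,\lambda)$ by a finite enumeration inside the freely generated $2$-category $\cA$, and then to exhibit an object of $[I,\cA](\Delta X,F)$ which is missed by $\lambda_*\circ\Delta$, so that the $1$-dimensional aspect of the $2$-limit universal property already fails.

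First I would list the objects of $\lslice \Delta F$, namely the $2$-cones over $F$. A $2$-cone with summit $Y$ is a pair of morphisms $Y\to B$ and $Y\to C$ becoming equal after post-composition with $b$ and $c$ respectively. As the only morphisms out of $A$, $B$, and $C$ in $\cA$ are identities, there are no cones with those summits; the summit $L$ admits only $\lambda=(\lambda_0,\lambda_1)$; and the summit $X$ admits exactly $\lambda*f=(\lambda_0 f,\lambda_1 f)$ and $\alpha=(\alpha_0,\alpha_1)$, where the relations $b\lambda_0=c\lambda_1$ and $b\alpha_0=c\alpha_1$ supply the cone condition, and the two ``mixed'' pairs are ruled out because $c\lambda_1 f$ and $b\alpha_0$ are distinct morphisms $X\to A$ in $\cA$. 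Hence $\lslice \Delta F$ has precisely the three objects $(L,\lambda)$, $(X,\lambda*f)$, and $(X,\alpha)$.

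Next I would show that each of these admits a unique morphism into $(L,\lambda)$ bearing only its identity $2$-cell, so that every hom-category into $(L,\lambda)$ is $\mathbbm{1}$. Following \Cref{def:slice}, such a morphism has an underlying morphism into $L$; the only morphisms into $L$ in $\cA$ are $\id_L$ and $f$, so the underlying $1$-morphism is forced in each case. It remains to pin down the accompanying modification. For $(L,\lambda)$ and $(X,\lambda*f)$ its components must be identity $2$-cells, since $\cA$ carries no nonidentity $2$-endomorphism of $\lambda_0$, $\lambda_1$, $\lambda_0 f$, or $\lambda_1 f$; for $(X,\alpha)$ the two components are forced to be the only available $2$-cells $\gamma_0\colon\lambda_0 f\Rightarrow\alpha_0$ and $\gamma_1\colon\lambda_1 f\Rightarrow\alpha_1$. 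I would then verify that this last datum is a genuine morphism by checking the modification identity, which here reduces to the whiskering equality $b*\gamma_0=c*\gamma_1$. Finally, since $\cA$ has no nonidentity $2$-endomorphism of $\id_L$ or of $f$, the unique $2$-cell on each of these morphisms is its identity, and $(L,\lambda)$ is $2$-terminal in $\lslice \Delta F$.

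The last step is immediate: the category $\cA(X,L)$ consists of the single object $f$ together with its identity, so $\lambda_*\circ\Delta$ carries it to the single cone $\lambda*f$, while $\alpha$ is a distinct object of $[I,\cA](\Delta X,F)$ not in the image. Thus $\lambda_*\circ\Delta$ fails to be surjective on objects and $(L,\lambda)$ is not a $2$-limit. I expect the crux to be the middle step, and specifically the bookkeeping of $2$-cells in the freely generated $\cA$: one must argue that $\gamma_0$ and $\gamma_1$ are the only candidate modification components and that they satisfy $b*\gamma_0=c*\gamma_1$, so that the morphism $(X,\alpha)\to(L,\lambda)$ exists and is unique. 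It is precisely this morphism --- arising from a modification of $2$-cones that has no underlying $2$-morphism in $\cA$ --- that renders $(L,\lambda)$ terminal in the lax-slice while obstructing its being a $2$-limit.
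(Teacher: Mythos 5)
Your proof is correct and follows essentially the same route as the paper's: enumerate the three $2$-cones $(L,\lambda)$, $(X,\lambda*f)$, $(X,\alpha)$, exhibit the unique morphism from each into $(L,\lambda)$ in the lax-slice, observe there are no non-trivial $2$-cells into $(L,\lambda)$, and conclude by noting that $\alpha$ lies outside the image of $\lambda_*\circ\Delta$ since $\cA(X,L)=\{f\}$. If anything you are more explicit than the paper at the one delicate point, namely that $(f,\gamma_0,\gamma_1)$ is a legitimate lax-slice morphism only because the modification identity $b*\gamma_0=c*\gamma_1$ holds in $\cA$ --- a relation the paper imposes explicitly in the analogous \Cref{ce:2tsnot2l} but leaves tacit in the presentation of this $2$-category, where it must likewise be understood as part of the generating data.
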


\begin{proof}
  The objects of the lax-slice $\lslice \Delta F$ are given by the $2$-cones over $F$:
\[ (L,\lambda),\quad (X,\alpha), \ \  \text{and} \ \ (X,\lambda* f).
\]
Each of these objects admits precisely one morphism to $(L,\lambda)$
in $\lslice \Delta F$ given by
\begin{align*}
    (\id_L,\id_{\lambda_0},\id_{\lambda_1})\colon &(L,\lambda)\to (L,\lambda) \\
    (f,\gamma_0,\gamma_1)\colon &(X,\alpha)\to (L,\lambda) \\
    (f,\id_{\lambda_0 f},\id_{\lambda_1 f})\colon &(X,\lambda* f)\to (L,\lambda).
\end{align*}
There are no non-trivial $2$-morphisms to $(L,\lambda)$ in
$\sslice \Delta F$, since there are no non-trivial $2$-morphisms between
$X$ and $L$ in $\cA$. This proves that $(L,\lambda)$ is $2$-terminal
in $\lslice \Delta F$.

However, the $2$-cone $\ninl{\Delta X}{\alpha}F$ is an object of
$[I,\cA](\Delta X,F)$, but it is not in the image of
$\lambda_*\circ\Delta$.
\end{proof}

\begin{reduction}\label{rem:2tpnot2l}
  By requiring $\gamma_0$ and $\gamma_1$ to be invertible in \Cref{ce:tlsnot2l}, we can
  similarly show that $(L,\lambda)$ is $2$-terminal in the pseudo-slice $\pslice \Delta F$ of $2$-cones
  over $F$, but it is not a $2$-limit of $F$.
\end{reduction}

\section{The cases of pseudo- and lax-limits} \label{sec:psdlaxlimit}

Recall that part of the definition of a $2$-limit involved $2$-cones which were $2$-natural transformations. In \Cref{sec:laxpsdslice}, we presented weaker notions of $2$-dimensional natural transformations, namely pseudo- and lax-natural transformations. These give other ways of taking a $2$-dimensional limit of a $2$-functor, by changing the shape of the $2$-dimensional cones. The corresponding notions are called \emph{pseudo-limits} and \emph{lax-limits}.

In this section, we show that results similar to those we have seen in \Cref{sec:2limits,sec:laxpsdslice} hold for pseudo- and lax-limits. In the lax-limit case, we show that:
\begin{itemize}
    \item every lax-limit is $2$-terminal in the strict-slice of lax-cones (\Cref{rem:pllltss}),
    \item not every $2$-terminal object in the strict-slice of lax-cones is a lax-limit (\cref{ce:2tssnotll}),
    \item not every lax-limit is $2$-terminal in the lax-slice of lax-cones (\cref{ce:llnot2tls}),
    \item not every $2$-terminal object in the lax-slice of lax-cones is a lax-limit (\cref{ce:2tlnotll}).
\end{itemize}
From the last two, we also derive the result that lax-limits are not related to $2$-terminal objects in the pseudo-slice of lax-cones. With all of the results and counter-examples we have established thus far, we are able to derive proofs and counter-examples covering the conjectures related to pseudo-limits.

We first introduce the notions of pseudo- and lax-limits.

\pagebreak

\begin{definition} \label{def:laxlim}
Let $I$ and $\cA$ be
  $2$-categories, and let $F\colon I\to \cA$ be a $2$-functor.
  \begin{rome}
  \item A \textbf{pseudo-limit} of $F$ comprises the data of an object $L\in \cA$
  together with a pseudo-natural transformation
  $\lambda\colon \Delta L\Rightarrow F$ such that, for each
  object $X\in\cA$, the functor
  \[ \ainl{\cA(X,L)}{\lambda_*\circ\Delta}{\Psd[I,\cA](\Delta X, F)} \]
  given by post-composition with $\lambda$ is an isomorphism of categories.
  \item A \textbf{lax-limit} of $F$ comprises the data of an object $L\in \cA$
  together with a lax-natural transformation
  $\lambda\colon \Delta L\Rightarrow F$ such that, for each
  object $X\in\cA$, the functor
  \[ \ainl{\cA(X,L)}{\lambda_*\circ\Delta}{\Lax[I,\cA](\Delta X, F)} \]
  given by post-composition with $\lambda$ is an isomorphism of categories.
  \end{rome}
\end{definition}

In order to consider slices in which these pseudo- and lax-limit cones live, we need to change the shape of the cone objects of the slices considered in \Cref{def:strictslice,def:slice}.

\begin{remark}
  We can also define the strict-, pseudo-, and lax-slices of pseudo-cones
  (resp. lax-cones) over $F$, by considering objects of the form $(X,\mu)$ where $\ninl {\Delta X}\mu F$ is a pseudo-natural (resp.~lax-natural) transformation. These constructions can be achieved by replacing $[I,\cA]$ in the pullbacks of
  \Cref{def:strictslice,def:slice} with $\Psd[I,\cA]$ (resp.~$\Lax[I,\cA]$). For example, the pseudo-slice of lax-cones is the following pullback.
  \begin{diagram*}
    \node(1)[]{$\pslice[lx]{\Delta}{F}$};
    \node(2)[below of= 1]{$\cA\times \mathbbm{1}$};
    \node(3)[right of= 2, xshift=2cm]{$\Lax[I,\cA]\times \Lax[I,\cA]$};
    \node(4)[above of= 3]{$\Psd[\mathbbm{2},\Lax[I,\cA]]$};
    \draw[a](1)to (2);
    \draw[a](2)to node[swap,la]{$(\Delta,F)$}(3);
    \draw[a](1)to (4);
    \draw[a](4) to node[la]{$(s,t)$}(3);
    \pullback{1};
  \end{diagram*}
\end{remark}

There is an analogue of \Cref{pos:2l2ts} in the case of pseudo-limits (resp.~lax-limits), whose proof may be derived by replacing $2$-natural transformations with pseudo-natural ones (resp.~lax-natural ones).

\begin{proposition}\label{rem:pllltss}
A pseudo-limit (resp.~lax-limit) of a $2$-functor is $2$-terminal in the strict-slice of pseudo-cones (resp.~lax-cones).
\end{proposition}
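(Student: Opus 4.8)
The plan is to transport the proof of \Cref{pos:2l2ts} essentially verbatim, replacing $2$-cones by pseudo-cones (resp.\ lax-cones) and the hom-$2$-category $[I,\cA]$ by $\Psd[I,\cA]$ (resp.\ $\Lax[I,\cA]$) throughout. I treat the pseudo case explicitly; the lax case is word-for-word identical after substituting $\Lax$ for $\Psd$.

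First I would record the pseudo analogues of \Cref{rem:univprop,rem:univ2terminal}. Since a pseudo-limit $(L,\lambda)$ is defined by asking that $\lambda_*\circ\Delta\colon \cA(X,L)\to \Psd[I,\cA](\Delta X,F)$ be an isomorphism of categories for each $X\in\cA$, its universal property splits into a $1$-dimensional aspect on objects—for every pseudo-cone $\mu\colon \Delta X\Rightarrow F$ there is a unique $f_\mu\colon X\to L$ with $\lambda\Delta f_\mu=\mu$—and a $2$-dimensional aspect on morphisms—for every modification $\varphi\colon \mu\Rightarrow\nu$ of pseudo-cones there is a unique $2$-morphism $\alpha\colon f_\mu\Rightarrow f_\nu$ with $\lambda*\Delta\alpha=\varphi$. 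Dually, $2$-terminality in the strict-slice of pseudo-cones unfolds exactly as in \Cref{rem:univ2terminal}, since the objects, morphisms, and $2$-morphism condition of that slice are obtained from \Cref{def:strictslice} merely by allowing $\mu$ to be pseudo-natural rather than $2$-natural.

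Next I would note that the two $1$-dimensional aspects coincide: both assert that for each $X$ and each pseudo-cone $\mu$ there is a unique $f_\mu$ with $\lambda\Delta f_\mu=\mu$. This supplies the object-level content of the isomorphism $\cA(X,L)\cong\mathbbm{1}$ demanded of a $2$-terminal object, exactly as in the first paragraph of the proof of \Cref{pos:2l2ts}. It then remains to verify the $2$-dimensional aspect of $2$-terminality, i.e.\ that the only endomorphism $2$-morphism of $f_\mu$ in the strict-slice is the identity. Any such $2$-morphism $\alpha\colon f_\mu\Rightarrow f_\mu$ satisfies $\lambda*\Delta\alpha=\id_{\lambda\Delta f_\mu}=\id_\mu$ by the defining $2$-cell condition of the strict-slice (cf.\ \Cref{def:strictslice}~(iii)). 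As whiskering an identity $2$-morphism again yields an identity, $\id_{f_\mu}$ satisfies $\lambda*\Delta\id_{f_\mu}=\id_\mu$ too; hence $\alpha$ and $\id_{f_\mu}$ have the same image $\id_\mu$ under $\lambda_*\circ\Delta$. Applying the uniqueness clause of the $2$-dimensional aspect of the pseudo-limit to the identity modification $\id_\mu$ forces $\alpha=\id_{f_\mu}$.

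Since this is a purely formal transport, I expect no genuine obstacle. The only points meriting care are confirming that the isomorphism of hom-categories $\lambda_*\circ\Delta$ really does decompose into the stated bijections at the object and morphism levels for pseudo- and lax-cones, and that the strict-slice of pseudo-/lax-cones imposes precisely the same $2$-cell identity $\lambda*\Delta\alpha=\id_\mu$ as the strict-slice of $2$-cones. Both hold because passing from $2$-natural to pseudo- or lax-natural transformations alters only the \emph{cones}—the objects of the slice—and leaves the form of the slice's morphisms and $2$-morphisms unchanged.
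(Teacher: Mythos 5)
Your proposal is correct and matches the paper's intended argument exactly: the paper states that the proof of \cref{pos:2l2ts} carries over by replacing $2$-natural transformations with pseudo-natural (resp.\ lax-natural) ones, which is precisely the transport you carry out. The details you supply -- matching the $1$-dimensional aspects and applying the uniqueness clause of the $2$-dimensional aspect to the identity modification -- are the same steps as in the proof of \cref{pos:2l2ts}.
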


However, not every $2$-terminal object in the strict-slice of pseudo- or lax-cones is a pseudo- or lax-limit. In particular, \Cref{ce:2tsnot2l} exhibits such an object in the pseudo-limit case:

\begin{reduction}\label{rem:tsnotllpl}
    Since there are no invertible $2$-morphisms in \Cref{ce:2tsnot2l}, this is also an example of a $2$-terminal object in the strict-slice of pseudo-cones that is not a pseudo-limit.
\end{reduction}

Let us recall that \Cref{ce:2tsnot2l} has $2$-morphisms $\gamma_0$ and $\gamma_1$ that introduce two additional lax-cones with summit $X$ over $F$. These new lax-cones do not admit a morphism to $(L,\lambda)$ in the strict-slice of lax-cones. This arrangement demonstrates that $(L,\lambda)$ is \emph{not} $2$-terminal in the strict-slice of lax-cones. Therefore, we can not use this counter-example for the lax-limit case.

The issue at heart here is that modifications between lax-cones over a $2$-functor may be turned into lax-cones over this same $2$-functor. Thus, to find an example of a $2$-terminal object in the strict-slice of lax-cones that is not a lax-limit, we must find a case where such a transformation is not possible. In order to create such an example, we need the diagram shape to have objects that are both the source and the target of a non-trivial morphism.

\begin{ce} \label{ce:2tssnotll}
  Let $I$ be the $2$-category freely generated by the data
  \begin{center}
  \begin{tikzpicture}[node distance=1.3cm,baseline=(0.base)]
      \node(0)[]{$0$};
      \node(1)[right of= 0]{$1$};
      \draw[a] ($(0.east)+(0,3pt)$) to node[la]{$x$} ($(1.west)+(0,3pt)$);
      \draw[a] ($(1.west)-(0,3pt)$) to node[la]{$y$} ($(0.east)-(0,3pt)$);
    \end{tikzpicture} ,
    \end{center}
    i.e.~the non-trivial morphisms in $I$ are given by all possible composites of $x$ and $y$, e.g.~$xyxy$.
    Let $\cA$ be the $2$-category freely generated by the data
    \begin{diagram*}[node distance=1.2cm]
    \node(1)[]{$X$};
    \node(2)[above right of= 1,xshift=.3cm]{$L$};
    \node(3)[below right of= 1,xshift=.3cm]{$L$};
    \node(4)[below right of= 2,xshift=.3cm]{$A$};
    \node(6)[below right of= 3,xshift=.3cm]{$B$};
    \draw[a](1)to node[la]{$f$}(2);
    \draw[a](1)to node[swap,la]{$g$}(3);
    \draw[a](2)to node[la]{$\lambda_{0}$}(4);
    \draw[a](3)to node[la,over,shrink]{$\lambda_{0}$}(4);
    \draw[a](3)to node(b)[swap,la]{$\lambda_{1}$}(6);
    \draw[a](4)to node[la]{$a$}(6);
    \cell[la]{2}{3}{$\gamma_{0}$};
    \cell[la, near end]{4}{b}{$\lambda_x$};
    \node(7)[right of= 3,xshift=1cm]{$X$};
    \node(8)[above right of= 7,xshift=.3cm]{$L$};
    \node(9)[below right of= 7,xshift=.3cm]{$L$};
    \node(10)[below right of= 8,xshift=.3cm]{$B$};
    \node(12)[above right of= 8,xshift=.3cm]{$A$};
    \draw[a](7)to node[la]{$f$}(8);
    \draw[a](7)to node[swap,la]{$g$}(9);
    \draw[a](8)to node[over,la,shrink]{$\lambda_{1}$} node(b)[swap]{} (10);
    \draw[a](9)to node[la,swap]{$\lambda_{1}$}(10);
    \draw[a](8)to node[la]{$\lambda_{0}$}(12);
    \draw[a](12)to node(a)[la]{$a$}(10);
    \cell[la]{8}{9}{$\gamma_{1}$};
    \cell[la,near end,xshift=-2pt]{12}{b}{$\lambda_x$};
    \node at ($(4)!0.5!(7)$) {$=$};

    \node(1)[right of =a]{$X$};
    \node(2)[above right of= 1,xshift=.3cm]{$L$};
    \node(3)[below right of= 1,xshift=.3cm]{$L$};
    \node(4)[below right of= 2,xshift=.3cm]{$B$};
    \node(6)[below right of= 3,xshift=.3cm]{$A$};
    \draw[a](1)to node[la]{$f$}(2);
    \draw[a](1)to node[swap,la]{$g$}(3);
    \draw[a](2)to node[la]{$\lambda_{1}$}(4);
    \draw[a](3)to node[la,over,shrink]{$\lambda_{1}$}(4);
    \draw[a](3)to node(b)[swap,la]{$\lambda_{0}$}(6);
    \draw[a](4)to node[la]{$b$}(6);
    \cell[la]{2}{3}{$\gamma_{1}$};
    \cell[la, near end]{4}{b}{$\lambda_y$};
    \node(7)[right of= 3,xshift=1cm]{$X$};
    \node(8)[above right of= 7,xshift=.3cm]{$L$};
    \node(9)[below right of= 7,xshift=.3cm]{$L$};
    \node(10)[below right of= 8,xshift=.3cm]{$A$};
    \node(12)[above right of= 8,xshift=.3cm]{$B$};
    \draw[a](7)to node[la]{$f$}(8);
    \draw[a](7)to node[swap,la]{$g$}(9);
    \draw[a](8)to node[over,la,shrink]{$\lambda_{0}$} node(b)[swap]{} (10);
    \draw[a](9)to node[la,swap]{$\lambda_{0}$}(10);
    \draw[a](8)to node[la]{$\lambda_{1}$}(12);
    \draw[a](12)to node[la]{$b$}(10);
    \cell[la]{8}{9}{$\gamma_{0}$};
    \cell[la,near end,xshift=-2pt]{12}{b}{$\lambda_y$};
    \node at ($(4)!0.5!(7)$) {$=$};
    \end{diagram*}
    subject to the relations $(\lambda_x*g)(a*\gamma_0)=\gamma_1(\lambda_x*f)$, and $(\lambda_y*g)(b*\gamma_1)=\gamma_0(\lambda_y*f)$. Again, we have all possible composites of the morphisms $a$ and $b$ in $\cA$, and all possible pastings of the $2$-morphisms $\lambda_x$ and $\lambda_y$. Take $F\colon I\to \cA$ to be the diagram defined on the generators $x$ and $y$ of $I$ by
  \begin{center}
  \begin{tikzpicture}[node distance=1.3cm,baseline=(0.base)]
      \node(0)[]{$A$};
      \node(1)[right of= 0]{$B$};
      \draw[a] ($(0.east)+(0,3pt)$) to node[la]{$a$} ($(1.west)+(0,3pt)$);
      \draw[a] ($(1.west)-(0,3pt)$) to node[la]{$b$} ($(0.east)-(0,3pt)$);
    \end{tikzpicture} .
    \end{center}
\end{ce}

\begin{remark}
  Note that the morphisms $\lambda_0\colon L\to A$ and $\lambda_1\colon L\to B$ together with the $2$-morphisms $\lambda_x\colon a\lambda_0\Rightarrow \lambda_1$ and $\lambda_y\colon b\lambda_1\Rightarrow \lambda_0$ suffice to give the data of a lax-natural transformation $\lambda\colon \Delta L\Rightarrow F$. Indeed, by \Cref{def:nattransf} (2), the $2$-morphism component of $\lambda$ at some composite of $x$ and $y$ is determined by the corresponding pasting of the $2$-morphism components $\lambda_x$ and $\lambda_y$.
\end{remark}

\begin{claim}
  The object $(L,\lambda\colon \Delta L\Rightarrow F)$ is $2$-terminal in the strict-slice $\sslice[lx] \Delta F$ of lax-cones over~$F$, but the functor
\[ \ainl{\cA(X,L)}{\lambda_*\circ\Delta}{\Lax[I,\cA](\Delta X,F)} \] given
by post-composition with $\lambda$ is not surjective on morphisms,
i.e.~$(L,\lambda)$ is not a lax-limit of $F$.
\end{claim}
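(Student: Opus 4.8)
The statement has two independent halves, and I would prove them separately, exploiting throughout that both $I$ and $\cA$ are freely generated so that every $1$- and $2$-cell admits an explicit description as a formal composite. The plan is: first, to enumerate the lax-cones over $F$ and thereby check that $(L,\lambda)$ is $2$-terminal in $\sslice[lx]{\Delta}{F}$, following the template of \Cref{pos:2l2ts} and the proof of \Cref{ce:2tsnot2l}; and second, to exhibit a single modification between lax-cones with summit $X$ that is not hit by post-composition with $\lambda$, witnessing that $(L,\lambda)$ is not a lax-limit.

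For the terminal half, I would first list the $1$-cells with codomain $L$: since every generating $1$-cell out of $L$, $A$, or $B$ lands in $\{A,B\}$ and never returns to $L$ or $X$, the only such $1$-cells are $\id_L$, $f$, and $g$. Likewise the only $2$-cells between parallel endomorphisms of $\id_L$, $f$, or $g$ are identities, since the generating $2$-cells $\gamma_0,\gamma_1$ both go $f\Rightarrow g$ and cannot be composed into a nonidentity endo-$2$-cell. Granting that the lax-cones over $F$ are exactly $(L,\lambda)$, $(X,\lambda*f)$, and $(X,\lambda*g)$, each admits a unique cone-morphism to $(L,\lambda)$ — namely $\id_L$, $f$, and $g$ respectively, forced by the strict equalities $\lambda\,\Delta h=\mu$ — and the strict-slice hom-category from each to $(L,\lambda)$ is then $\mathbbm 1$, giving $2$-terminality.

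The second half is where the design of the example pays off. Post-composition sends a $2$-cell $\gamma\colon f\Rightarrow g$ to the modification $\lambda*\Delta\gamma$, whose components are $\lambda_0*\gamma$ and $\lambda_1*\gamma$; thus every morphism in the image uses the \emph{same} generating $2$-cell in both components. I would instead consider the ``mixed'' candidate $\varphi$ with components $\varphi_0=\lambda_0*\gamma_0$ and $\varphi_1=\lambda_1*\gamma_1$. Its modification axioms (\Cref{def:modif}) at the two generators $x,y$ of $I$ unwind to precisely $(\lambda_x*g)(a*\gamma_0)=\gamma_1(\lambda_x*f)$ and $(\lambda_y*g)(b*\gamma_1)=\gamma_0(\lambda_y*f)$, which are exactly the two relations imposed on $\cA$; the axioms at composite morphisms of $I$ then follow formally from these. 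Hence $\varphi$ is a genuine modification $\lambda*f\to\lambda*g$, i.e.\ a morphism of $\Lax[I,\cA](\Delta X,F)$. But $\varphi$ is not in the image of $\lambda_*\circ\Delta$: such an image would require a single $\gamma\in\cA(X,L)(f,g)=\{\gamma_0,\gamma_1\}$ with $\lambda_0*\gamma=\lambda_0*\gamma_0$ and $\lambda_1*\gamma=\lambda_1*\gamma_1$, forcing $\gamma=\gamma_0$ and $\gamma=\gamma_1$ at once, which is impossible. So the functor fails to be surjective on morphisms and $(L,\lambda)$ is not a lax-limit.

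I expect the main obstacle to be the two freeness-based bookkeeping claims taken for granted above: that the three listed cones are \emph{all} the lax-cones over $F$, and that the whiskered $2$-cells $\lambda_i*\gamma_0$ and $\lambda_i*\gamma_1$ remain distinct in the quotient of the free $2$-category by the two imposed relations. Both require controlling $\cA$ precisely — ideally through a normal form for its $2$-cells, or by producing a $2$-functor out of $\cA$ that separates the relevant $2$-cells while respecting the relations — and it is here, rather than in the formal slice-theoretic manipulations, that the real care is needed. It is worth noting that the infinite index category $I$, in which each object is both source and target of a nontrivial morphism, is precisely what lets the two relations turn the mixed pair $\varphi$ into a modification without collapsing $\gamma_0$ and $\gamma_1$.
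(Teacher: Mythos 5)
Your overall strategy --- enumerate the lax-cones to establish $2$-terminality, then exhibit a modification $\lambda*f\threecellarr\lambda*g$ not hit by $\lambda_*\circ\Delta$ --- is the paper's strategy, and you correctly identify that the two imposed relations are exactly the modification axioms at the generators $x$ and $y$. But you have misread the generating data of $\cA$, and the misreading is not cosmetic. The $2$-cells $\gamma_0$ and $\gamma_1$ are \emph{not} $2$-cells $f\Rightarrow g$ in $\cA(X,L)$; they are $\gamma_0\colon\lambda_0 f\Rightarrow\lambda_0 g$ and $\gamma_1\colon\lambda_1 f\Rightarrow\lambda_1 g$, living in $\cA(X,A)$ and $\cA(X,B)$ respectively. (This is forced by the very relations you quote: in $(\lambda_x*g)(a*\gamma_0)=\gamma_1(\lambda_x*f)$ the whiskering $a*\gamma_0$ only typechecks if $\gamma_0$ already has source $\lambda_0f$.) Consequently $\cA(X,L)(f,g)$ is \emph{empty}, not $\{\gamma_0,\gamma_1\}$; the witnessing modification is simply the pair $(\gamma_0,\gamma_1)$ itself, not a ``mixed'' whiskering $(\lambda_0*\gamma_0,\lambda_1*\gamma_1)$; and the failure of surjectivity on morphisms is immediate, because there is no $2$-cell $f\Rightarrow g$ whatsoever to map onto it --- no injectivity-of-whiskering or normal-form argument is needed at this step.

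The misreading also breaks the first half of your argument. If $\gamma_0,\gamma_1$ really were $2$-cells $f\Rightarrow g$, then the whiskerings $\lambda_1*\gamma_0$ and $\lambda_0*\gamma_1$ would exist in $\cA$, and these would assemble into additional ``mixed'' lax-cones with summit $X$ (for instance with legs $\lambda_0 f$ at $0$ and $\lambda_1 g$ at $1$) admitting no morphism at all to $(L,\lambda)$ in the strict-slice; your claim that the three listed cones are all of them would then be false, and $(L,\lambda)$ would fail to be $2$-terminal in $\sslice[lx]{\Delta}{F}$. Avoiding exactly this phenomenon --- modification components reassembling into errant lax-cones --- is the stated design constraint of this example, and it is achieved precisely by placing $\gamma_0,\gamma_1$ one level down, where the absence in $\cA$ of any $2$-cells $\lambda_1g\Rightarrow\lambda_0 f$ and $\lambda_0 g\Rightarrow\lambda_1 f$ blocks the reassembly (and the absence of $2$-cells between composites of $a$ and $b$ rules out lax-cones with summit $A$ or $B$). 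Once the types of $\gamma_0,\gamma_1$ are corrected, both halves of your outline collapse onto the paper's argument.
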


\begin{proof}
  The objects of the strict-slice $\sslice[lx] \Delta F$ are given by the
  lax-cones over $F$:
\[ (L,\lambda),\quad (X,\lambda*f), \ \  \text{and} \ \ (X,\lambda*g).
\]
Note that the $2$-morphisms $\gamma_0$ and $\gamma_1$ do not induce lax-cones over $F$ with summit $X$, since there are no $2$-morphisms from $\lambda_1 g$ to $\lambda_0 f$, and from $\lambda_0 g$ to $\lambda_1 f$ in $\cA$, respectively. There are also no lax-cones over $F$ with summit $A$ or $B$ since there are no non-trivial $2$-morphisms between any two composites of $a$ and $b$ in $\cA$.
Each of the objects above admits precisely one morphism to $(L,\lambda)$
in $\sslice[lx] \Delta F$ given by
\begin{align*}
    \id_L\colon &(L,\lambda) \to (L,\lambda) \\
    f\colon &(X,\lambda*f)\to (L,\lambda) \\
    g\colon &(X,\lambda*g)\to (L,\lambda).
\end{align*}
There are no non-trivial $2$-morphisms to $(L,\lambda)$ in
$\sslice[lx] \Delta F$, since there are no non-trivial $2$-morphisms between $X$ and $L$ in $\cA$. This proves that $(L,\lambda)$ is
$2$-terminal in $\sslice[lx] \Delta F$.

However, the $2$-morphisms $\gamma_0$ and $\gamma_1$ give the data of
a modification $\minl{\lambda*f}{\gamma}{\lambda*g}$, i.e.~a morphism in
$\Lax[I,\cA](\Delta X, F)$. But there is no $2$-morphism between $f$ and $g$
in $\cA$ that maps to $\gamma$ via
$\ainl{\cA(X,L)}{\lambda_*\circ\Delta}{\Lax[I,\cA](\Delta X, F)}$. Hence
$(L,\lambda)$ is not the lax-limit of $F$.
\end{proof}

\begin{remark}
  Note that, in the above counter-example, it is essential to have all free composites of $x$ and $y$ in $I$, and also of $a$ and $b$ in \cA. Indeed, if we impose any conditions on the composites of $x$ and $y$, e.g.~$x$ and $y$ are mutual inverses, then the $2$-functor $F$ must preserve these, and these new conditions on $a$ and $b$ in \cA add undesirable lax-cones with summits $A$ and~$B$.
\end{remark}

However, when the $2$-category $\cA$ admits tensors by $\mathbbm{2}$, there is an analogue of \Cref{prop:tensor} in the pseudo-limit (resp.~lax-limit) case, whose proof may be derived by replacing $2$-natural transformations by pseudo-natural ones (resp.~lax-natural ones).

\begin{proposition}
 Suppose $\cA$ is a $2$-category that admits tensors by $\mathbbm{2}$, and let $F\colon I\to \cA$ be a $2$-functor. Then an object is $2$-terminal in the strict-slice of pseudo-cones (resp.~lax-cones) over $F$ if and only if it is a pseudo-limit (resp.~lax-limit) of~$F$.
\end{proposition}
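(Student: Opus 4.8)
The plan is to mirror the proof of \Cref{prop:tensor}, replacing $2$-natural transformations throughout by pseudo-natural (resp.~lax-natural) ones. One implication is already recorded in \Cref{rem:pllltss}, so I would only need to prove that a $2$-terminal object $(L,\lambda)$ in the strict-slice of pseudo-cones (resp.~lax-cones) is a pseudo-limit (resp.~lax-limit). As before, the $1$-dimensional aspect of the universal property of such a limit coincides verbatim with the $1$-dimensional aspect of $2$-terminality, so this holds for free; the work is entirely in recovering the $2$-dimensional aspect, namely that every modification $\varphi$ between pseudo-/lax-cones $\Delta X\Rightarrow F$ is hit by a unique $2$-morphism $\alpha$ between the induced morphisms $X\to L$ with $\lambda*\Delta\alpha=\varphi$.

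The engine of the argument is again the tensor by $\mathbbm2$. First I would observe that $-\otimes\mathbbm2\colon\cA\to\cA$ is a $2$-functor (it is $2$-functorial by the $2$-naturality of the defining isomorphism $\cA(X\otimes\mathbbm2,Y)\cong\Cat(\mathbbm2,\cA(X,Y))$), so that $G\otimes\mathbbm2:=(-\otimes\mathbbm2)\circ G$ defines a $2$-functor $I\to\cA$ for any $2$-functor $G$; applied to a constant functor this yields $\Delta(X\otimes\mathbbm2)=\Delta X\otimes\mathbbm2$ exactly as in \Cref{prop:tensor}. The key step is to upgrade this to a tensor in the relevant functor $2$-category: I would show that $\Psd[I,\cA]$ (resp.~$\Lax[I,\cA]$) admits tensors by $\mathbbm2$, computed object-wise by $G\otimes\mathbbm2$. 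Granting this, the tensor isomorphism in the functor $2$-category identifies pseudo-/lax-cones $\Delta X\otimes\mathbbm2\Rightarrow F$ with functors $\mathbbm2\to\Psd[I,\cA](\Delta X,F)$ (resp.~$\Lax[I,\cA](\Delta X,F)$), that is, with modifications between pseudo-/lax-cones $\Delta X\Rightarrow F$; while the tensor in $\cA$ identifies morphisms $X\otimes\mathbbm2\to L$ with $2$-morphisms between morphisms $X\to L$. Feeding a modification $\varphi$, viewed as a pseudo-/lax-cone with summit $X\otimes\mathbbm2$, into the already-established $1$-dimensional universal property then produces the required unique $2$-morphism $\alpha$, which is precisely the $2$-dimensional aspect.

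The main obstacle is the object-wise tensor claim for $\Psd[I,\cA]$ and $\Lax[I,\cA]$. Unlike the $2$-natural case, these are not $\Cat$-enriched functor categories, so the citation \cite[\S 3.3]{KellyBook} does not apply directly and I would verify the universal property by hand. Concretely, I would unpack a pseudo-/lax-natural transformation $G\otimes\mathbbm2\Rightarrow H$ and check, component by component, that its data and coherence conditions are equivalent to the data of two pseudo-/lax-natural transformations $G\Rightarrow H$ together with a modification between them: the component $\mu_i\colon Gi\otimes\mathbbm2\to Hi$ transposes under the tensor isomorphism to a $2$-morphism between a source and a target morphism $Gi\to Hi$, while each naturality $2$-cell $\mu_f$ transposes, via the $2$-naturality of the tensor isomorphism read as an isomorphism of hom-categories $\cA(Gi\otimes\mathbbm2,Hj)\cong\Cat(\mathbbm2,\cA(Gi,Hj))$, to the naturality cells of the two transformations together with the single modification equation. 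The bookkeeping is heavier in the lax case, where one must track the direction of each coherence $2$-cell, but no new phenomenon arises; with this verification in hand the proof closes exactly as that of \Cref{prop:tensor}.
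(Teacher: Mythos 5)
Your proposal matches the paper's approach exactly: the paper gives no separate proof of this proposition, stating only that its proof ``may be derived by replacing $2$-natural transformations by pseudo-natural ones (resp.~lax-natural ones)'' in the proof of \cref{prop:tensor}, which is precisely the substitution you carry out, with the reverse implication delegated to \cref{rem:pllltss} as in the paper. If anything you are more careful than the paper, since you correctly flag that the object-wise tensor by $\mathbbm{2}$ in $\Psd[I,\cA]$ and $\Lax[I,\cA]$ is not covered by the enriched-functor-category citation used for $[I,\cA]$ and must be verified by hand.
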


We dedicate the rest of the section to exploring counter-examples which together refute all the remaining conjectures relating pseudo- and lax-limits to the lax- and pseudo-slices of appropriate cones. We begin by recalling \Cref{ce:2lnot2tl}, which also shows that not every pseudo-limit is a $2$-terminal object in the lax-slice of pseudo-cones:

\begin{reduction}\label{red:plnot2tl}
    Since there are no invertible $2$-morphisms in \Cref{ce:2lnot2tl}, this is also an example of a pseudo-limit that is not $2$-terminal in the lax-slice of pseudo-cones.
\end{reduction}

Let us recall that \Cref{ce:2lnot2tl} has a $2$-morphism $\alpha$. This $2$-morphism was an obstruction to $(L,\lambda)$ being $2$-terminal in the lax-slice, but not to being a $2$-limit. In the move to lax-cones, however, this $2$-morphism introduces an additional lax-cone over $F$ with summit $X$ that is not in the image of $\lambda_*\circ \Delta$. Therefore, $(L,\lambda)$ cannot be a lax-limit of~$F$, and we need a new counter-example for the lax-limit case.

Our new counter-example should have a non-trivial $2$-morphism to serve as an obstruction to $2$-terminality in the lax-slice. But, to ensure that this $2$-morphism is in the image of post-composition by the to-be lax-limit cone, we must also introduce new relations.

\pagebreak

\begin{ce}\label{ce:llnot2tls}
  Let $I=\mathbbm 2$, and let $\cA$ be the
  $2$-category generated by the data
  \begin{diagram*}
    \node(1)[]{$X$};
    \node(2)[right of= 1]{$A$};
    \node(3)[right of= 2]{$B$};
    \draw[a,bend left=40](1)to node(t)[la]{$\alpha_{0}$}(2);
    \draw[a,bend right=40](1)to node(b)[la,swap]{$\alpha_{1}$}(2);
    \draw[a,la](2)to node[la]{$f$}(3);
    \cell[la]{t}{b}{$\alpha$};
  \end{diagram*}
  subject to the relations $f\alpha_{0}=f\alpha_{1}$ and
  $f*\alpha=\id_{f\alpha_{0}}$. Consider the $2$-functor $\ainl{\mathbbm 2}{f}\cA$ given by the
  morphism $\ainl AfB$.
\end{ce}

\begin{claim}
  The object $(A,\ninl{\Delta A}{\id_f}{f})$ is the lax-limit of $f$
  in \cA, but it is not $2$-terminal in the lax-slice $\lslice[lx] \Delta f$ of
  lax-cones over $f$.
\end{claim}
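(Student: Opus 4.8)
The plan is to first determine all hom-categories of $\cA$, since the whole argument turns on how the two relations degenerate the data out of $X$. Write $\lambda$ for the lax-cone $\id_f\colon\Delta A\Rightarrow f$, with components $\lambda_0=\id_A$, $\lambda_1=f$ and structure $2$-cell $\lambda_!=\id_f\colon f\Rightarrow f$; whiskering by a morphism $h\colon Y\to A$ then yields the lax-cone $\lambda\Delta h=(h,\,fh,\,\id_{fh})$. I would record that $\cA(X,A)\cong\mathbbm 2=\{\alpha_0\overset{\alpha}{\to}\alpha_1\}$, while the relation $f\alpha_0=f\alpha_1$ leaves a single morphism $X\to B$ and the relation $f*\alpha=\id_{f\alpha_0}$ trivialises its only candidate non-identity endo-$2$-cell, so that $\cA(X,B)\cong\mathbbm 1$; the remaining relevant hom-categories $\cA(A,A)$ and $\cA(A,B)$ are $\mathbbm 1$, and $\cA(B,A)$ is empty.

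I would then check the lax-limit property of \Cref{def:laxlim} object by object, i.e.\ that $\lambda_*\circ\Delta\colon\cA(Y,A)\to\Lax[\mathbbm 2,\cA](\Delta Y,f)$ is an isomorphism for each object $Y$. The substantive case is $Y=X$: a lax-cone with summit $X$ has first component $\mu_0\in\{\alpha_0,\alpha_1\}$, and then $\mu_1$ and $\mu_!$ are forced by $\cA(X,B)\cong\mathbbm 1$, so the only two such cones are $\lambda\Delta\alpha_0$ and $\lambda\Delta\alpha_1$; there is exactly one modification between them (its component at $0$ must be $\alpha$ and at $1$ the identity), and the relation $f*\alpha=\id_{f\alpha_0}$ is precisely what makes the modification axiom hold. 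Hence $\Lax[\mathbbm 2,\cA](\Delta X,f)\cong\mathbbm 2$, and one checks that $\lambda_*\circ\Delta$ sends $\alpha_0,\alpha_1,\alpha$ to exactly this data, giving the isomorphism. For $Y=A$ both sides are $\mathbbm 1$, and for $Y=B$ both sides are empty; so $(A,\lambda)$ is the lax-limit of $f$.

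For the failure of $2$-terminality, I would exhibit two distinct morphisms $(X,\lambda\Delta\alpha_1)\to(A,\lambda)$ in the lax-slice $\lslice[lx]{\Delta}{f}$. The first is $(\alpha_1,\id)$, whose filler is the identity modification since $\lambda\Delta\alpha_1$ is the source cone itself. The second is $(\alpha_0,\lambda*\Delta\alpha)$, whose filler is the modification $\lambda*\Delta\alpha\colon\lambda\Delta\alpha_0\Rightarrow\lambda\Delta\alpha_1$ obtained by whiskering $\lambda$ with $\alpha$; this is a legitimate lax-slice morphism because its target cone is exactly $\lambda\Delta\alpha_1$. As the underlying morphisms $\alpha_1$ and $\alpha_0$ differ, these are two distinct morphisms into $(A,\lambda)$, so the relevant hom-category is not $\mathbbm 1$ and $(A,\lambda)$ fails to be $2$-terminal.

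The main obstacle is the lax-limit verification, and specifically confirming that no stray lax-cones or modifications with summit $X$ appear beyond the two already found; this is where the relations do the real work, collapsing $\cA(X,B)$ to a point and forcing the modification axiom to hold via $f*\alpha=\id_{f\alpha_0}$. Once this is in hand, the non-terminality half is immediate, the only point requiring care being the recognition that $(\alpha_0,\lambda*\Delta\alpha)$ is a well-defined second morphism.
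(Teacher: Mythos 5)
Your proposal is correct and follows essentially the same route as the paper's proof: enumerate the lax-cones over $f$, verify that $\lambda_*\circ\Delta$ is an isomorphism on each hom-category (with the relation $f*\alpha=\id_{f\alpha_0}$ supplying the modification axiom), and then exhibit the two distinct lax-slice morphisms $(\alpha_1,\id)$ and $(\alpha_0,\lambda*\Delta\alpha)$ into $(A,\id_f)$. Your version is merely more explicit about the hom-category computations and the summit-$B$ case.
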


\begin{proof}
  Let us begin by enumerating all the lax-cones over $f$:
  \[ (A,\id_{f}),\quad
    (X,\id_{f\alpha_{0}}),\ \ \text{and}\ \
    (X,\id_{f\alpha_{1}})\ .\]
  Note that the last two above-listed objects differ in their cone components to $A$: in the first case the leg is $\alpha_0$, and in the second case it is $\alpha_1$.

  We can see that $(A,\id_f)$ is a lax-limit of $f$, since we have
  \[ \cA(X,A)=\{\alpha_0\overset{\alpha}{\Rightarrow}\alpha_1\} \ \ \text{and} \ \
    \Lax[\mathbbm{2},\cA](\Delta X,f)=\{{\id_{f\alpha_0}}
    \overset{f*\alpha}{\threecellarr} {\id_{f\alpha_1}}\}\] and  $\cA(A,A)=\{\id_A\}$ and
  $[\mathbbm{2},\cA](\Delta A,f)=\{\id_f\}$.

  However, there are two distinct morphisms from $(X,\id_{f\alpha_{1}})$ to $(A,\id_{f})$ in the lax-slice $\lslice[lx] \Delta f$ of lax-cones, given by
  \begin{align*}
    (\alpha_{1},(\id_{\alpha_{1}},\id_{f\alpha_{1}}))\colon & (X,\id_{f\alpha_{1}}) \to  (A,\id_{f})\\
    (\alpha_{0},(\alpha,\id_{f\alpha_1})) \colon & (X,\id_{f\alpha_{1}}) \to  (A,\id_{f}).
  \end{align*}
  where we have used the fact $f*\alpha=\id_{f\alpha_{1}}$ in displaying the latter morphism. Therefore, $(A,\id_f)$ is not $2$-terminal in the lax-slice of lax-cones over $f$.
\end{proof}

\begin{remark}
  Note that the object $(A,\id_f)$ is also a pseudo-limit of $f$. This gives a second example of a pseudo-limit that is not $2$-terminal in the lax-slice of pseudo-cones.
\end{remark}

\begin{reduction} \label{red:llplnot2tp}
  By requiring $\alpha$ to be invertible in \Cref{ce:llnot2tls}, we can
  similarly show that $(A,\id_{f})$ is a lax-limit (resp.~pseudo-limit) of $f$, which is
  not $2$-terminal in the pseudo-slice of lax-cones (resp.~pseudo-cones).
\end{reduction}

Moreover, one can derive from \Cref{ce:tlsnot2l} that not every $2$-terminal object in the lax-slice of pseudo-cones is a pseudo-limit:

\begin{reduction}\label{rem:pseudobad}
  Since there are no invertible $2$-morphisms in \Cref{ce:tlsnot2l}, this is also an example of a $2$-terminal object in the lax-slice of
  pseudo-cones that is not a pseudo-limit.
\end{reduction}

\Cref{ce:tlsnot2l} in fact also applies in the lax-cone case. Although
the computations are more involved, one can check this is also an example of a $2$-terminal object in the lax-slice of lax-cones that it is not a lax-limit. However, there is a more striking example for this case. When
considering diagrams of shape $\mathbbm{2}$, it turns out that even a
single non-trivial lax-cone over a morphism exhibits a $2$-terminal
object in the lax-slice of lax-cones that is not a lax-limit of the morphism.

\begin{ce}\label{ce:2tlnotll}
  Let $I=\mathbbm{2}$, and let $\cA$ be the
  $2$-category generated by the data
  \begin{center}
  \begin{tikzpicture}[baseline=(3.base)]
    \node(1)[]{$X$};
    \node(2)[right of= 1]{$A$};
    \node(3)[below of= 2]{$B$};
    \draw[a](1)to node[la]{$\alpha_{0}$}(2);
    \draw[a](2)to node[la]{$f$}(3);
    \draw[a](1)to node(a)[swap,la]{$\alpha_{1}$}(3);
    \cell[la]{2}{a}{$\alpha$};
  \end{tikzpicture} .
  \end{center}
  Consider the $2$-functor $\ainl{\mathbbm 2}{f}\cA$ given by the morphism
  $\ainl{A}{f}{B}$.
\end{ce}

\begin{claim}
  The object $(A,\ninl{\Delta A}{\id_f}{f})$ is $2$-terminal
  in the lax-slice $\lslice[lx]\Delta f$ of lax-cones over~$f$, but the
  functor
  \[ \ainl{\cA(X,A)}{(\id_{f})_*\circ\Delta}{\Lax[\mathbbm{2},\cA](\Delta X,f)} \]
  is not surjective on objects, i.e.~$(A,\id_f)$ is not a lax-limit
  of $f$.
\end{claim}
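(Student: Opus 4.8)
The plan is to exploit that both $I=\mathbbm 2$ and $\cA$ are small and freely generated, and to argue by direct enumeration. First I would record the cells of $\cA$: the only morphism $X\to A$ is $\alpha_0$, the morphisms $X\to B$ are $\alpha_1$ and $f\alpha_0$, the only morphism $A\to B$ is $f$, nothing non-trivial emanates from $B$, and the only non-identity $2$-morphism anywhere is $\alpha\colon f\alpha_0\Rightarrow\alpha_1$. Unwinding \Cref{def:nattransf} over $\mathbbm 2$, where conditions (1)--(3) are vacuous, a lax-cone with summit $Y$ is exactly a triple $(\mu_0\colon Y\to A,\ \mu_1\colon Y\to B,\ \mu_{0\to 1}\colon f\mu_0\Rightarrow\mu_1)$. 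Writing $\lambda:=\id_f$, so that $\lambda=(\id_A,f,\id_f)$, enumeration then shows that the objects of the lax-slice are exactly $\lambda$ (summit $A$; there are none with summit $B$) together with the two summit-$X$ cones $\lambda\Delta\alpha_0=(\alpha_0,f\alpha_0,\id_{f\alpha_0})$ and $\mu:=(\alpha_0,\alpha_1,\alpha)$, the latter built from the non-trivial $2$-cell $\alpha$.

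For $2$-terminality I would compute each hom-category into $(A,\lambda)$. By \Cref{def:slice}, a morphism $(Y,\nu)\to(A,\lambda)$ is a pair $(h\colon Y\to A,\varphi)$ with $\varphi$ a modification $\lambda\Delta h\Rightarrow\nu$; since $\lambda\Delta h=(h,fh,\id_{fh})$, such a $\varphi$ consists of $2$-cells $\varphi_0\colon h\Rightarrow\nu_0$ and $\varphi_1\colon fh\Rightarrow\nu_1$, and the single modification axiom of \Cref{def:modif} forces $\varphi_1=\nu_{0\to 1}(f*\varphi_0)$. Hence a morphism into $(A,\lambda)$ is determined by the pair $(h,\varphi_0)$. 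In each of the three cases $h$ is forced to be $\alpha_0$ (resp.\ $\id_A$) and $\varphi_0$ to be an identity, since no other $1$- or $2$-cells are available; the induced $\varphi_1$ is then $\id_f$, $\id_{f\alpha_0}$, and, crucially, $\alpha$, respectively. So each object admits exactly one morphism to $(A,\lambda)$. Finally, a $2$-morphism between two such morphisms is carried by a $2$-cell $\alpha_0\Rightarrow\alpha_0$ or $\id_A\Rightarrow\id_A$ in $\cA$, of which only identities exist; thus every hom-category into $(A,\lambda)$ is isomorphic to $\mathbbm 1$, and $(A,\lambda)$ is $2$-terminal.

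For the failure of the lax-limit property I would note that $\cA(X,A)$ has the single object $\alpha_0$ and only its identity $2$-cell, so $(\id_f)_*\circ\Delta$ sends it to the cone $\lambda\Delta\alpha_0=(\alpha_0,f\alpha_0,\id_{f\alpha_0})$. The cone $\mu=(\alpha_0,\alpha_1,\alpha)$ is a genuinely distinct object of $\Lax[\mathbbm 2,\cA](\Delta X,f)$, already differing from $\lambda\Delta\alpha_0$ in its leg at $1$ ($\alpha_1$ versus $f\alpha_0$, distinct morphisms in the free $2$-category), and so lies outside the image. Thus $(\id_f)_*\circ\Delta$ is not surjective on objects and $(A,\id_f)$ is not a lax-limit of $f$.

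The conceptual crux -- and the whole point of the example -- is the unique morphism $\mu\to(A,\lambda)$: the modification axiom allows precisely the non-trivial $2$-cell $\alpha$ to reappear as the component $\varphi_1$, so the lax-slice absorbs the new cone $\mu$ into a morphism towards $\lambda$ even though $\mu$ is not in the image of post-composition. Confirming simultaneously that this yields a (unique) morphism and that $\mu$ nonetheless escapes the image of $(\id_f)_*\circ\Delta$ is exactly where the tension between $2$-terminality and the lax-limit property sits, and is the step meriting the most care.
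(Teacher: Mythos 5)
Your proposal is correct and follows essentially the same route as the paper: enumerate the three lax-cones over $f$, exhibit the unique morphism from each into $(A,\id_f)$ (with the key one being $(\alpha_0,(\id_{\alpha_0},\alpha))\colon(X,\alpha)\to(A,\id_f)$), and observe that the cone $(X,\alpha)$ lies outside the image of $(\id_f)_*\circ\Delta$ since $\cA(X,A)=\{\alpha_0\}$. Your explicit use of the modification axiom to show $\varphi_1$ is determined by $(h,\varphi_0)$ makes the uniqueness of these morphisms slightly more transparent than the paper's bare enumeration, but the argument is the same.
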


\begin{proof}
  The objects of the lax-slice $\lslice[lx] \Delta f$ are the following
  lax-cones over~$f$:
  \[ (A,\id_f),\quad (X,\id_{f\alpha_0}), \ \ \text{and} \ \
    (X,\alpha).
  \]
  Each of these objects admits precisely one morphism to $(A,\id_f)$
  in $\lslice[lx] \Delta f$, given by
  \begin{align*}
    (\id_A,(\id_A,\id_f))\colon & (A,\id_f)\to (A,\id_f) \\
    (\alpha_0,(\id_{\alpha_0},\id_{f\alpha_0}))\colon &(X,\id_{f\alpha_0})\to (A,\id_f)\\
    (\alpha_0,(\id_{\alpha_0},\alpha))\colon &(X,\alpha)\to (A,\id_f).
  \end{align*}
  As there are no non-trivial $2$-morphisms between $X$ and $A$ in \cA,
  there are no non-trivial $2$-morphisms to $(A,\id_f)$ in
  $\lslice[lx] \Delta f$. This shows that $(A,\id_f)$ is $2$-terminal in
  the lax-slice $\lslice[lx] \Delta f$ of lax-cones.

  Next, observe that the lax-cone
  $\ninl{f\alpha_{0}}{\alpha}{\alpha_{1}}$ is an object of
  $\Lax[I,\cA](\Delta X,f)$, but it is not in the image of
  $(\id_f)_*\circ\Delta$. Hence $(A,\id_f)$ is not a lax-limit of
  $f$.
\end{proof}

\begin{reduction} \label{ce:2tpnotpl}
  By requiring $\alpha$ to be
  invertible in \Cref{ce:2tlnotll}, we can similarly show that
  $(A,\id_f)$ is $2$-terminal in the pseudo-slice of lax-cones
  (resp.~pseudo-cones) over~$f$, but that it is not a lax-limit
  (resp.~pseudo-limit) of $f$.
\end{reduction}

\section{Bi-type limits for the completionist} \label{sec:bilimits}

At this point we have seen that $2$-terminal objects in all slices and $2$-dimensional limits do not generally align. In this last section, which we present for completeness, we address one final weakening of the central definitions we have thus far considered. In defining $2$-dimensional limits with various strengths of cones, we have always asked for an \emph{isomorphism} of categories to govern the universal property. However, we might seek to relax this requirement by asking instead that the relevant functor induces only an \emph{equivalence} of categories. This leads to the following definitions.

\begin{definition} \label{def:bilimit} Let $I$ and $\cA$ be $2$-categories, and let $F\colon I\to \cA$ be a $2$-functor. A \textbf{bi-limit} of $F$ comprises the data of an object $L\in \cA$ together with a $2$-natural transformation $\lambda\colon \Delta L\Rightarrow F$ which are such that, for each object $X\in\cA$, the functor
  \[ \ainl{\cA(X,L)}{\lambda_*\circ\Delta}{[I,\cA](\Delta X, F)} \]
  given by post-composition with $\lambda$ is an equivalence of categories.

  Similarly, we can define \textbf{pseudo-bi-limit} (resp.~\textbf{lax-bi-limit}) by replacing $[I,\cA]$ in the above with $\Psd[I,\cA]$ (resp.~$\Lax[I,\cA])$.
\end{definition}

\begin{remark} \label{rem:univbilim}
    The two aspects of a universal property of a bi-limit may be reformulated more explicitly by expanding the content of the equivalence of categories above. For every $X\in \cA$,
  \begin{enumerate}
      \item  for every $2$-cone $\mu\colon \Delta X\Rightarrow F$, there is a morphism $f\colon X\to L$ in $\cA$ and an invertible modification $\minl{\lambda\Delta f}{\varphi}{\mu}$,
      \item for all morphisms $f,g\colon X\to L$, and for every modification $\minl{\lambda\Delta f}{\Theta}{\lambda\Delta g}$, there is a unique $2$-morphism $\alpha\colon f\Rightarrow g$ in $\cA$ such that $\lambda*\Delta\alpha=\Theta$.
  \end{enumerate}
\end{remark}

\begin{definition} \label{def:biterminal}
  Let $\cA$ be a $2$-category. An object $L\in \cA$ is \textbf{bi-terminal} if for all $X\in \cA$ there is an equivalence
  of categories $\cA(X,L)\eqv \mathbbm{1}$.
\end{definition}

In formulating analogous conjectures for the bi-limit and bi-terminal cases, we should pay careful attention to \Cref{rem:univbilim} (1). Observe that, given a $2$-cone $\ninl {\Delta X}\mu F$, the $1$-dimensional aspect of the universal property of a bi-limit $(L,\lambda)$ gives only a morphism $(X,\mu)\to(L,\lambda)$ of the pseudo-slice of $2$-cones $\pslice \Delta F$. It is thus inappropriate to look at bi-terminal objects in the strict-slice of $2$-cones when attempting to recover a general bi-limit.

Much as was the case for $2$-limits, bi-limits are in general bi-terminal objects in the pseudo-slice of $2$-cones. In fact, all of the positive results of \Cref{sec:2limits} follows in this context. We defer all proofs to the paper \cite{tslil} which deals with such relationships in greater generality. The first result can be deduced from \cite[Corollary 7.22]{tslil} and the second is \cite[Corollary 7.25]{tslil}.

\begin{proposition} \label{prop:bibi}
  Let $I$ and $\cA$ be $2$-categories, and let $\ainl IF\cA$ be a $2$-functor. If $(L,\lambda\colon \Delta L\Rightarrow F)$ is a bi-limit of $F$, then $(L,\lambda)$ is bi-terminal in the pseudo-slice  $\pslice {\Delta} F$ of $2$-cones over $F$.
\end{proposition}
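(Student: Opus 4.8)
The plan is to establish bi-terminality directly from the definition: I must show that for every object $(X,\mu)$ of $\pslice{\Delta}{F}$ the hom-category $\pslice{\Delta}{F}((X,\mu),(L,\lambda))$ is equivalent to $\mathbbm 1$. Since a category is equivalent to $\mathbbm 1$ exactly when it is nonempty and has a singleton hom-set between any ordered pair of its objects, I would verify these two properties in turn, matching each against one of the two aspects of the bi-limit universal property recorded in \Cref{rem:univbilim}.

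For nonemptiness, I would fix the $2$-cone $\mu\colon\Delta X\Rightarrow F$ and invoke the essential-surjectivity aspect \Cref{rem:univbilim} (1), which furnishes a morphism $\ainl XfL$ together with an \emph{invertible} modification $\minl{\lambda\Delta f}{\varphi}{\mu}$. By \Cref{def:slice} this is precisely the data of a morphism $(f,\varphi)\colon(X,\mu)\to(L,\lambda)$ of the pseudo-slice, so the hom-category is inhabited. The invertibility of $\varphi$ is exactly what makes the pseudo-slice, rather than the strict-slice, the correct place to test terminality, as anticipated in the discussion preceding the proposition.

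For the singleton condition, I would take two parallel morphisms $(f,\varphi),(g,\psi)\colon(X,\mu)\to(L,\lambda)$ and form the modification $\Theta:=\psi^{-1}\varphi\colon\lambda\Delta f\Rightarrow\lambda\Delta g$, which makes sense because $\psi$ is invertible. Applying the fully-faithful aspect \Cref{rem:univbilim} (2) to $\Theta$ yields a unique $2$-morphism $\ninl f\alpha g$ in $\cA$ with $\lambda*\Delta\alpha=\Theta$. I would then observe that the defining equation $\psi(\lambda*\Delta\alpha)=\varphi$ of a pseudo-slice $2$-morphism is, since $\psi$ is invertible, equivalent to $\lambda*\Delta\alpha=\psi^{-1}\varphi=\Theta$; hence this $\alpha$ is exactly a $2$-morphism $(f,\varphi)\Rightarrow(g,\psi)$, and its uniqueness as such transfers verbatim from the uniqueness clause of \Cref{rem:univbilim} (2).

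Combining the two steps shows each hom-category is nonempty with singleton hom-sets, hence a contractible groupoid equivalent to $\mathbbm 1$, which is bi-terminality. I do not expect a genuine obstacle; the content is a clean dictionary between the two halves of the equivalence of categories defining a bi-limit and the two requirements of bi-terminality. The only point demanding care is the bookkeeping of the sources, targets, and invertibility of the modifications $\varphi$ and $\psi$, so that $\psi^{-1}\varphi$ has the correct boundary and the pseudo-slice $2$-morphism equation translates cleanly -- and it is precisely this reliance on invertible fillers that would break down in the strict-slice.
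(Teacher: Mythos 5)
Your argument is correct, and every step checks out against the paper's definitions: \Cref{rem:univbilim}~(1) produces exactly the data $(f,\varphi)$ of a pseudo-slice morphism $(X,\mu)\to(L,\lambda)$ since the filler is required to be invertible there; and for two parallel morphisms $(f,\varphi),(g,\psi)$ the pseudo-slice $2$-morphism equation $\psi(\lambda*\Delta\alpha)=\varphi$ from \Cref{def:slice}~(iii) is indeed equivalent, by invertibility of $\psi$, to $\lambda*\Delta\alpha=\psi^{-1}\varphi$, so the fully-faithful clause \Cref{rem:univbilim}~(2) gives exactly one such $\alpha$. Your characterisation of categories equivalent to $\mathbbm{1}$ as the nonempty indiscrete ones is also the right one to use. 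The one point of comparison worth noting is that the paper does not prove \Cref{prop:bibi} in the text at all: it defers to the companion paper \cite{tslil}, where the proposition is deduced from a general theorem on bi-representations of pseudo-presheaves (bi-representations versus bi-initial objects in a $2$-category of elements), of which bi-limits are a special case. Your proof is therefore a self-contained, elementary alternative: it buys independence from the external machinery at essentially no cost, since the whole content is, as you say, a dictionary between the two halves of the defining equivalence of categories and the two requirements of bi-terminality. The general route buys uniformity -- the same theorem also yields \Cref{prop:bitensor} and the pseudo-/lax-cone variants of \Cref{prop:bibilaxpsd} -- and your argument adapts verbatim to those cone variants as well.
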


\begin{proposition} \label{prop:bitensor}
  Suppose $\cA$ is a $2$-category that admits tensors by $\mathbbm{2}$, and let $F\colon I\to \cA$ be a $2$-functor. Then an object is bi-terminal in the pseudo-slice $\pslice \Delta F$ of $2$-cones over $F$ if and only if it is a bi-limit of $F$.
\end{proposition}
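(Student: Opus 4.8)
We have already established one implication in \Cref{prop:bibi}, so it remains to show that a bi-terminal object $(L,\lambda)$ of the pseudo-slice $\pslice \Delta F$ is a bi-limit of $F$. For $Z\in\cA$ write $P_Z := \lambda_*\circ\Delta\colon\cA(Z,L)\to[I,\cA](\Delta Z,F)$ for the comparison functor; being a bi-limit amounts to $P_X$ being an equivalence of categories for every $X\in\cA$. The plan is to first reformulate bi-terminality as a condition on the $P_Z$, then to promote this condition, applied at both $X$ and $X\otimes\mathbbm 2$, to an equivalence using tensors.

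For the reformulation I would unwind \Cref{def:biterminal,def:slice}: an object of the hom-category $\pslice \Delta F((Z,\mu),(L,\lambda))$ is a pair $(f,\varphi)$ of a morphism $f\colon Z\to L$ and an \emph{invertible} modification $\varphi\colon P_Z f\Rightarrow\mu$, while a $2$-morphism $(f,\varphi)\Rightarrow(g,\psi)$ is an $\alpha\colon f\Rightarrow g$ with $\psi\,(P_Z\alpha)=\varphi$. This is exactly the iso-comma category of objects of $\cA(Z,L)$ equipped with an isomorphism to $\mu$ under $P_Z$, and a short check shows such a category is equivalent to $\mathbbm 1$ for every $\mu$ precisely when $\mathrm{core}(P_Z)$ is an equivalence of groupoids (essential surjectivity handling nonemptiness, and the bijection of isomorphisms handling contractibility). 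Hence $(L,\lambda)$ is bi-terminal in $\pslice \Delta F$ if and only if $\mathrm{core}(P_Z)$ is an equivalence for every $Z\in\cA$.

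Next I would isolate the purely categorical lemma doing the real work: writing $\Cat(\mathbbm 2,-)$ for the arrow category and $P^{\mathbbm 2}:=\Cat(\mathbbm 2,P)$, a functor $P\colon\cC\to\cD$ is an equivalence if and only if both $\mathrm{core}(P)$ and $\mathrm{core}(P^{\mathbbm 2})$ are equivalences of groupoids. The forward direction is immediate. For the converse, essential surjectivity of $P$ is read off from $\mathrm{core}(P)$; fullness follows by taking an arbitrary arrow $\theta$ of $\cD$, using essential surjectivity of $\mathrm{core}(P^{\mathbbm 2})$ to obtain an arrow $\gamma$ of $\cC$ and an isomorphism $(\sigma_0,\sigma_1)\colon P\gamma\iso\theta$ in $\Cat(\mathbbm 2,\cD)$, lifting $\sigma_0,\sigma_1$ through the iso-fullness of $\mathrm{core}(P)$ to isomorphisms $s_0,s_1$ of $\cC$, and checking that $\alpha:=s_1\gamma s_0^{-1}$ satisfies $P\alpha=\theta$; faithfulness follows because a coincidence $P\alpha=P\alpha'$ exhibits the identity as an isomorphism $P^{\mathbbm 2}\alpha\iso P^{\mathbbm 2}\alpha'$, which lifts through iso-fullness of $\mathrm{core}(P^{\mathbbm 2})$ and collapses under iso-faithfulness of $\mathrm{core}(P)$ to force $\alpha=\alpha'$.

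Finally I would connect the two via tensors, exactly as in the proof of \Cref{prop:tensor}. Since $\cA$ admits tensors by $\mathbbm 2$, so does $[I,\cA]$ objectwise and $\Delta(X\otimes\mathbbm 2)=\Delta X\otimes\mathbbm 2$; the defining isomorphisms $\cA(X\otimes\mathbbm 2,L)\iso\Cat(\mathbbm 2,\cA(X,L))$ and $[I,\cA](\Delta(X\otimes\mathbbm 2),F)\iso\Cat(\mathbbm 2,[I,\cA](\Delta X,F))$ are $2$-natural, and under them the comparison functor $P_{X\otimes\mathbbm 2}$ is carried to $(P_X)^{\mathbbm 2}$. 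Applying the reformulation at $Z=X$ and at $Z=X\otimes\mathbbm 2$ then yields that $\mathrm{core}(P_X)$ and $\mathrm{core}((P_X)^{\mathbbm 2})$ are equivalences, whence $P_X$ is an equivalence by the lemma; as $X$ was arbitrary, $(L,\lambda)$ is a bi-limit of $F$. I expect the main obstacle to be the lemma itself — in particular verifying that the lifted isomorphisms yield a \emph{strict} equality $P\alpha=\theta$ rather than merely an isomorphism, which is precisely the step that fails without tensors — together with the naturality bookkeeping identifying $P_{X\otimes\mathbbm 2}$ with $(P_X)^{\mathbbm 2}$; both are routine but must be carried out with care.
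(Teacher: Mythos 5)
Your argument is correct, but note that the paper itself contains no proof of this proposition: it is deferred to the companion paper \cite{tslil}, where it falls out of a general theorem on bi-representations of pseudo-presheaves preserving tensors by $\mathbbm 2$ (\cite[Theorem 6.14, Corollary 7.25]{tslil}). What you give instead is a direct, self-contained argument, and its three ingredients all check out: the unwinding of bi-terminality, the purely $1$-categorical lemma that $P$ is an equivalence when $\mathrm{core}(P)$ and $\mathrm{core}(P^{\mathbbm 2})$ are equivalences of groupoids, and the identification $P_{X\otimes\mathbbm 2}\iso (P_X)^{\mathbbm 2}$ via the $2$-natural tensor isomorphisms exactly as in the proof of \cref{prop:tensor}. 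The lemma is the genuinely new ingredient relative to the strict case, and your proof of it is sound (in particular the observation that the lifted square gives a \emph{strict} equality $P\alpha=\theta$). One small imprecision: your reformulation of bi-terminality as ``$\mathrm{core}(P_Z)$ is an equivalence for every $Z$'' is only an implication, not an equivalence. Since the $2$-morphisms of $\pslice\Delta F$ are \emph{arbitrary} $2$-morphisms $\alpha$ of $\cA$ satisfying the whiskering identity (the pseudo-slice is locally full, per \cref{def:slice}), contractibility of the hom-category says that every isomorphism $\theta\colon P_Zf\iso P_Zg$ has a \emph{unique lift among all} $2$-morphisms $f\Rightarrow g$; this implies $\mathrm{core}(P_Z)$ is an equivalence (the unique lift of $\theta$ is forced to be invertible by lifting $\theta^{-1}$ and the identities), but the converse could fail if some non-invertible $2$-morphism also maps to $\theta$. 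Since you only ever use the forward direction, this does not affect the proof, but the ``if and only if'' should be weakened or justified. The trade-off between the two routes is the expected one: the citation buys generality (weighted bi-limits, pseudo-presheaves), while your argument is elementary and keeps the paper self-contained.
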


\begin{remark} \label{prop:bibilaxpsd}
    \Cref{prop:bibi,prop:bitensor} also hold true when the $2$-cones are replaced by pseudo- and lax-cones.
\end{remark}

With every sunrise there is a sunset, and just as the positive results extended themselves to this weaker context, so too do the negative. Since isomorphims of categories are, in particular, equivalences of categories, a $2$-type limit or a $2$-terminal object is, in particular, a bi-type limit or a bi-terminal object. Moreover, an examination of all of the counter-examples and reductions referenced in the tables below shows that there are no non-trivial invertible $2$-morphisms in the $2$-categories involved. This allows us to deduce the following facts. First, the notions of $2$-type limits (resp.~$2$-terminal objects) and bi-type limits (resp.~bi-terminal objects) in each of these counter-examples coincide. Second, for each of \Cref{ce:2tsnot2l,ce:2tssnotll}, the strict-slice and pseudo-slice coincide. Therefore, all counter-examples and reductions for $2$-type limits and $2$-terminality seen in previous sections are also counter-examples and reductions for bi-type limits and bi-terminality, as summarised in the following tables.

\begin{table}[!ht]
  \centering
  \caption{Bi-type limits which are not bi-terminal}
  \begin{tabular}{*{3}{c|}c}
    $\to$ implies bi-terminal in $\downarrow$ & Bi-limit & Pseudo-bi-limit & Lax-bi-limit\\\hline
    Pseudo-slice & \valid{} \cref{prop:bibi} & \valid{} \cref{prop:bibilaxpsd} & \valid{} \cref{prop:bibilaxpsd} \\
    Lax-slice & \cref{ce:2lnot2tl} & \Cref{red:plnot2tl} & \cref{ce:llnot2tls}
  \end{tabular}
\end{table}

\begin{table}[!ht]
  \centering
  \caption{Bi-terminal objects which are not bi-type limits}
  \begin{tabular}{*{3}{c|}c}
    Bi-terminal in $\downarrow$ implies $\to$ & Bi-limit & Pseudo-bi-limit & Lax-bi-limit \\\hline
    Pseudo-slice & \cref{ce:2tsnot2l} & \cref{rem:tsnotllpl} & \cref{ce:2tssnotll} \\
    Lax-slice & \cref{ce:tlsnot2l} & \cref{rem:pseudobad} & \cref{ce:2tlnotll}
  \end{tabular}
\end{table}

\bibliographystyle{plain}
\bibliography{references}

\end{document}